\theoremstyle{plain}
\newtheorem{theorem*}{Theorem}
\newtheorem{conjecture*}{Conjecture}
\newtheorem{theorem}{Theorem}[section]
\newtheorem{lemma}[theorem]{Lemma}
\newtheorem{proposition}[theorem]{Proposition}
\newtheorem{corollary}[theorem]{Corollary}
\theoremstyle{definition}
\newtheorem{definition}[theorem]{Definition}
\newtheorem{conjecture}[theorem]{Conjecture}
\newenvironment{manualtheorem}[1]{%
  \manualtheoreminner
}{\endmanualtheoreminner}
\newenvironment{manualcor}[1]{%
  \manualcorinner
}{\endmanualcorinner}
\title{\bf{Upper Bounds on the Chromatic Index of Linear Hypergraphs}}
\author{Thomas Murff$^{\, 1, }$\footnote{\url{ thomasjmurff@gmail.com }}  { }  {  }   Xerxes D. Arsiwalla$^{\, 1, }$\footnote{\url{x.d.arsiwalla@gmail.com}}   \\  
{}  \\
{\it \small  $^{1}$Wolfram Institute for Computational Foundations of Science, IL, USA}   
}
\date{}
\begin{document}

\maketitle

\begin{abstract}
We address the problem of finding upper bounds on the chromatic index $q(V,E)$ of linear (and loopless) hypergraphs. The first bound we find is defined through a color-preserving group on a proper and minimally edge-colored linear hypergraph, whose orbits serve as a finer partition to the hypergraph's coloring, thereby yielding an upper bound on $q(V,E)$. The next set of theorems in this paper relates to combinatorial properties of hypergraph coloring. Our results suggest a plausible approach to solving the Berge-F\"{u}redi conjecture, providing an upper bound on the chromatic index that directly relates $q(V,E)$ and $\Delta([(V,E)]_{2}) + 1$. Furthermore, we provide three sufficient conditions for the conjecture to hold within this framework, when involving the Helly property for hypergraphs.
\end{abstract}

\vspace{2pc}
{\it Keywords}: Linear Hypergraphs, Chromatic Index, Vizing’s Theorem, Berge-F\"{u}redi Conjecture.  

\clearpage

\section{Introduction and Main Theorems}

Vizing's Theorem states that, for an undirected graph $G$, $G$ may be edge-colored such that 
\[  q(G) \leq \Delta(G) + 1 \]  
for the \emph{chromatic index} $q(G)$, the \emph{maximum degree} $\Delta(G)$, and for a \emph{proper coloring} (no adjacent edges are colored the same). There is a generalized version of this conjecture for linear, loopless hypergraphs \cite{Berge1997}
 \cite{BrettoFaisantHennecart2025}, which are hypergraphs where $|V_{i} \cap V_{j}| \leq 1$ for any $V_{i},V_{j} \in E$, as well as for any $V_{i} \in E$ one assumes $|V_{i}| > 1$. This also uses the notion of the \emph{2-section} of a hypergraph $(V,E)$, which is the graph denoted $[(V,E)]_{2}$-where every hyperedge is replaced with a complete graph on its particular vertices.

\begin{conjecture}[Berge-F\"{u}redi conjecture]
    A linear (loopless) hypergraph $(V,E)$ gives the inequality 
    \begin{eqnarray}
        q(V,E) \leq \Delta([(V,E)]_{2}) + 1
    \end{eqnarray}
\end{conjecture}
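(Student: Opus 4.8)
The plan is to argue by induction on $|E|$, following the skeleton of Vizing's fan/Kempe-chain proof but transporting it to the hypergraph setting through the 2-section. First I would record the elementary identity that for a linear, loopless hypergraph and any vertex $v$,
\[ \Delta([(V,E)]_{2}) \;=\; \max_{v \in V}\ \sum_{e \in E,\, v \in e} (|e|-1), \]
since linearity forces the neighbourhoods $e\setminus\{v\}$ of the edges through $v$ to be pairwise disjoint in $[(V,E)]_{2}$. Writing $d^{*}(v):=\sum_{e\ni v}(|e|-1)$, this reduces the conjectured inequality to a statement about the weighted degree $d^{*}$ and gives the usual book-keeping of \emph{missing colours}: if a proper edge-colouring uses a colour set $C$ with $|C|=\Delta([(V,E)]_{2})+1$, then each vertex $v$ has at least $|C|-d^{*}(v)\ge 1$ colours free at it.

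Next I would pass to a minimal counterexample $(V,E)$, fix a distinguished edge $e_{0}=\{x_{0},x_{1},\dots,x_{k}\}$, and use minimality to properly colour $(V,E\setminus\{e_{0}\})$ with $|C|=\Delta([(V,E)]_{2})+1$ colours; the task is to extend this colouring to $e_{0}$. The obstruction is that no single colour is simultaneously free at every endpoint $x_{0},\dots,x_{k}$ of $e_{0}$. I would then build a hypergraph analogue of a Vizing fan rooted at $e_{0}$ — a sequence of edges $f_{1},f_{2},\dots$ carrying prescribed missing colours — designed so that a suitable colour rotation along the fan frees a common colour on $e_{0}$. Here the colour-preserving group machinery developed in this paper plays the role of organizing the fan: working modulo a colour-preserving automorphism, one recolours a representative edge and lets the group action force the corresponding recolouring on its whole orbit, which is precisely what keeps the colouring proper after the rotation, and the orbits provide the ``finer partition'' that pins down when such a rotation exists.

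The hard part — and the reason the conjecture is still open — is the Kempe-chain step. In a graph, swapping two colours along a maximal two-coloured path is always safe because every vertex on the path has degree two in the two-colour subgraph; in $[(V,E)]_{2}$ that subgraph can have higher-degree vertices, and, worse, a single hyperedge meets the chain in several vertices at once, so flipping its colour is an all-or-nothing move whose side effects propagate through the 2-section. This is where I would invoke the Helly property: when the relevant sub-family of hyperedges is Helly, the intersection pattern of the colour classes is controlled by a nerve, the obstructing chains close up into short cycles, and the fan rotation can be completed locally. The honest status of the plan is therefore conditional: the induction together with the orbit-structured fan yields the bound unconditionally whenever one of the three Helly-type hypotheses isolated in the sequel holds, and the residual gap is exactly to discharge the Helly assumption — either by showing that every minimal counterexample contains, or can be reduced to, a Helly sub-configuration on which the fan argument applies, or by replacing the Kempe-chain recolouring altogether with a global discharging or entropy-compression argument driven by $d^{*}$.
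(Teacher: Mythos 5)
The statement you are asked about is the Berge--F\"{u}redi conjecture itself, and the paper does not prove it: it is stated as a conjecture, and everything the paper establishes is either an unconditional but different bound (Theorems 1 and 2) or a \emph{sufficient condition} under which the conjecture holds (Theorem 2.1, Corollary 2.2, and the corollary requiring $|\Gamma_{c_{0}}|+1 \leq ar(V,E)-1$). Your proposal, by your own admission in its final paragraph, is likewise conditional, so it is not a proof of the statement. The concrete gap is exactly where you locate it: the Kempe-chain/fan-rotation step. In $[(V,E)]_{2}$ the subgraph spanned by two colour classes need not be a disjoint union of paths and even cycles, and a hyperedge must be recoloured as a single unit, so the colour swap is not a local involution and there is no argument that the rotation terminates in a proper colouring. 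Declaring that the Helly property ``controls'' this step restates the paper's Theorem 2.1 hypothesis rather than discharging it; you would still need to show that every minimal counterexample contains or reduces to a Helly sub-configuration, which is precisely the open problem.

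A secondary issue: the role you assign to the colour-preserving group $\boldsymbol{T}$ is not one it can play. In the paper $\boldsymbol{T}$ is defined relative to an \emph{already fixed} proper minimal colouring $\bar{C}$, and its only use is a Burnside count showing that the number of orbits upper-bounds $q(V,E)$; it preserves colours by definition and therefore cannot ``force the corresponding recolouring on its whole orbit'' --- it is a tool for counting a given colouring, not for modifying one. For a generic linear hypergraph $\boldsymbol{T}$ may well be trivial, in which case the orbit partition is the discrete partition and provides no structure for a fan. Your opening identity $\Delta([(V,E)]_{2}) = \max_{v}\sum_{e \ni v}(|e|-1)$ is correct for linear hypergraphs and matches the paper's $deg_{2}$, and the missing-colour book-keeping is sound, but these are the easy parts; the extension step that would complete the induction is missing, so the proposal does not prove the statement and no known argument (in this paper or elsewhere) does.
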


The core theorem of the group theory side to the paper is the following upper bound, where $\boldsymbol{T}$ is a subgroup of the automorphism group of $(V,E)$ related to hyperedge coloring.

\begin{manualtheorem}{1}[Group-theoretic bound]\label{thm: Burnside}
 \begin{eqnarray}
     q(V,E) \leq \frac{1}{|\boldsymbol{T}|} \cdot \sum_{t \in \boldsymbol{T}} |f^{-1}(\boldsymbol{1})^{t}| 
 \end{eqnarray}
\end{manualtheorem}

\vspace{3mm}

The main results for the combinatorics side of the paper are as follows, relating the relevant terms of the conjecture with inequalities, along with the new terms $|\Gamma_{c_{0}}|$ and $H_{\Gamma}$.

\begin{manualtheorem}{2}[Combinatorial bound]
    \begin{eqnarray}
        q(V,E) + |\Gamma_{c_{0}}|\leq \frac{|\Gamma_{c_{0}}|+1}{ar(V,E) - 1}\cdot \Delta([(V,E)]_{2}) + 1
    \end{eqnarray}  
    where $\Gamma_{c_{0}}$ varies over the colors $c_{0} \notin \bar{C}^{*}([v]_{\sim})$, for a fixed $v \in V$ with two-section maximum degree, i.e. $deg_{2}(v) = \Delta([(V,E)]_{2})$.
\end{manualtheorem}

Lastly, we give sufficient conditions for the Berge-F\"{u}redi conjecture to hold, involving the "Helly-ness" of a hypergraph $H_{\Gamma}$ derived from our initial hypergraph.

\begin{manualtheorem}{2.1}
    Let $(V,E)$ be a linear loopless hypergraph and $\bar{C}$ a proper minimal coloring. Also, let $ar(V,E) \geq 3$. If $(V,E)$ and $\bar{C}$ possess a derived hypergraph $H_{\Gamma}$ that has the Helly property, then $q(V,E) \leq \Delta([(V,E)]_{2}) + 1$.
\end{manualtheorem}

\begin{manualcor}{2.2}\label{cor:gamma-tau}
    Let $(V,E)$ be a $k$-uniform, linear hypergraph for $k \geq 3$, with a proper minimal hyperedge coloring $\bar{C}$. Suppose further that, for fixed $v \in V$ with $deg_{2}(v) = \Delta([(V,E)]_{2})$, there exists a clique of hyperedges (in that it is pairwise-intersecting) $\mathcal{F} \subseteq E$ with the nonempty set of colors $C/\bar{C}^{*}([v]_{\sim})$, such that $\bar{C}(\mathcal{F}) = C/\bar{C}^{*}([v]_{\sim})$. Then, $|\mathcal{F}| > k^{2} -k + 1$ implies $q(V,E) \leq \Delta([(V,E)]_{2}) + 1$.
\end{manualcor}

\section{Notation and Preliminaries}
To begin, there is some notation and background material to establish:
\begin{itemize}
    \item $q(V,E)$ is the chromatic index of the hypergraph $(V,E)$, which is the minimal number of colors such that a proper hyperedge coloring exists (often denoted with $\chi^{'}$).
    \item $deg_{2}(u)= \sum_{V_{i} \in [\{u\}, V] \cap E} (|V_{i}| - 1)$.
    \item $deg(u) = |[\{u\}, V] \cap E|$.
    \item $\Delta(V,E)$ is the max degree of $(V,E)$, meaning the largest number of hyperedges containing a given vertex.
    \item $\Delta([(V,E)]_{2})$ is the max degree of the two-section graph denoted $[(V,E)]_{2}$.
    \item $ar(V,E)$ and $r(V,E)$ are the antirank-the smallest size amongst the hyperedges, and the rank-the largest size amongst the hyperedges, respectively.

    \item $\bar{C}$ denotes the actual map that colors the hyperedges with colors from its underlying set $C$.
\end{itemize}

A \emph{hypergraph} can be thought of as a generalization of a \emph{graph}, in that its edge set consists of a collection of subsets of its own vertex set. In this way, a graph is a specific hypergraph whose edges are subsets of precisely two vertices each. For this paper, we will only be considering linear (every pair of hyperedges intersect on at most one vertex) hypergraphs.

\begin{definition}
    For a finite vertex set $V$, a collection $E$ of subsets of $V$ that define our edge set, and the power set $\mathcal{P}(V)$, we can formally identify a hypergraph $(V,E)$ with the inclusion mapping $i: E \hookrightarrow P(V)$.
\end{definition}

A key notion of this paper is to consider an alternative definition of a hypergraph.

\begin{definition}
    For a hypergraph $(V,E)$, define $f: \mathcal{P}(V) \rightarrow \{\boldsymbol{0},\boldsymbol{1}\}$ by $f(V_{i}) = \boldsymbol{1}$ if and only if $V_{i} \in E$, and $f(V_{i}) = \boldsymbol{0}$ otherwise.
\end{definition}

This definition of a hypergraph as being encoded by a Boolean-valued function $f$ allows one to view the various subset relationships between both hyperedges (elements of the preimage $f^{-1}(\boldsymbol{1})$) and non-hyperedge subsets (elements of $f^{-1}(\boldsymbol{0})$). Specifically, these various relationships can be framed within the context of order theory, in the form of the power set. Throughout the rest of the paper, $f^{-1}(\boldsymbol{1})$ and $E$ are used interchangeably to denote the hyperedge set. We can formalize the power set with the following definition.

\begin{definition}
    The power set lattice can be defined as $\boldsymbol{\mathcal{P}(V)}:= \langle \mathcal{P}(V), \cap, \cup, \emptyset, V, ()^{c} \rangle$, where $\cap$ and $\cup$ denote set intersection and set union respectively, $V$ is the entire set, $\emptyset$ is the empty set, and $()^{c}$ is set complementation.
\end{definition}

Note that this is not the algebraic presentation for an arbitrary finite lattice, as there are additional elements to the structure. In general, lattices like the power set are called \emph{Boolean algebras}, which possess complementation, least and greatest elements, and an additional distributivity property which is akin to set distributivity of $\cap$ and $\cup$. Any finite Boolean algebra is (lattice) \emph{isomorphic} to the power set lattice of some finite set.

\vspace{2mm}

For the purpose of this paper, the power set is most usefully viewed as a \emph{metric space}\cite{DeWinterKorb2016}, along with the standard algebraic properties we have discussed thus far. The definition of the power set as a metric space aligns with the definition of a \emph{Hamming Space} as being a metric space. A Hamming space is a metric space on the set of Boolean-valued strings of length $n$, where the Hamming metric distance $d(\bar{x},\bar{y})$ is defined as the bit difference between strings $\bar{x}$ and $\bar{y}$. The interesting point of connection here is that this collection of Boolean-valued strings defines a finite Boolean algebra as well, which must be isomorphic to the power set lattice of a finite set. This exact correspondence allows us to define an analog of the Hamming distance for the power set lattice.

\begin{definition}
    For the power set $\mathcal{P}(V)$ of a finite set $V$, define the distance function $d(V_{i}, V_{j}):= |V_{i} \hspace{1mm} \Delta \hspace{1mm} V_{j}|$, where $\Delta$ represents the symmetric difference of sets. Then, $d$ is metric on $\mathcal{P}(V)$, so $\mathcal{P}(V)$ equipped with $d$ is a metric space.
\end{definition}

One can see here that the metric $d$ is just a measure of how many differing elements there are between two subsets. This is indeed analogous to the Hamming distance, as the presence of a $1$ in the $i$th entry of an $n$-length Boolean string is akin to the presence of the $i$th element in the given subset of an indexed set with $n$ elements. Thus, each string of length $n$ corresponds exactly to a specific subset of the power set, given some assignment of each element of the set with some entry/index.

\vspace{2mm}

As a brief aside, a relationship to emphasize about this portion of the material is the connection between \emph{satisfiable Boolean functions} and hypergraphs. Any satisfiable Boolean function has at least one tuple of 0s and 1s such that the function evaluates the tuple as having value $\boldsymbol{1}$. Since the underlying domain of a Boolean function is just a Hamming space, the codomain the two element Boolean algebra, and as established above we have the correspondence between each Hamming space and a finite power set metric space, then we can define an associated hypergraph to the Boolean function, which we know will have at least one hyperedge. This relationship holds in the other direction as well, by assigning the input hypergraph its associated satisfiable Boolean function. So, properties of Boolean functions as well as those of hypergraphs can be translated across this bridge and studied in tandem, potentially offering links in computational complexity between the two classes of objects.

\vspace{3mm}

Now, we can continue on with the group theoretic aspects of this paper. The type of group we are interested in is an \emph{isometry group}. An \emph{isometry} is simply a function between metric spaces, that preserves the metrics between both spaces.

\begin{definition}
    For metric spaces $\boldsymbol{X}$ and $\boldsymbol{Y}$, an isometry $\phi: X \rightarrow Y$ is a function between the underlying sets such that, for all $x_{1}, x_{2} \in X$, $d_{X}(x_{1}, x_{2}) = d_{Y}(\phi(x_{1}), \phi(x_{2}))$.
\end{definition}

We can consider the collection of all bijective isometries from a metric space to itself and define it as an \emph{automorphism group}, with function composition $\circ$, function inverses, and the identity map, defining the nature of the group. For our purposes, given a finite set $V$, we can treat $\mathcal{P}(V)$ as a metric space and define the group of isometries on it, $\boldsymbol{Iso(\mathcal{P}(V))}$. The group-theoretic analysis of the Hamming space and its isometry group in \cite{DeWinterKorb2016} inspired this group-theoretic approach to the power set, per the correspondence from the Boolean algebra (lattice) isomorphism. This group is the basis of the group-theoretic foundations for this paper, as it is the group whose elements will be drawn from to study hypergraphs encoded in the power set.

\vspace{2mm}

Take a hypergraph with a vertex set $V = \{v_{1}, ... ,v_{n}\}$, an edge set $E \subseteq \mathcal{P}(V)$, and a Boolean-valued function $f$ encoding it. The crux of the group theory connection is in selecting particular induced isometries from $\boldsymbol{Iso(\mathcal{P}(V))}$ that respect our Boolean-valued function, in that they only maps hyperedges to other hyperedges, and non-hyperedge subsets to other non-hyperedge subsets. This can be formalized with the following definitions.

\begin{definition}
    For a vertex set $V = \{v_{1}, ... ,v_{n}\}$, consider $\boldsymbol{Sym(V)}$, the symmetric group of permutations on the vertex set. Each $\pi \in \boldsymbol{Sym(V)}$ is just a bijective function on $V$. We can define a corresponding isometry $(\pi)^{'}$ on the power set $\mathcal{P}(V)$ as a metric space, whose action is given by $(\pi)^{'}(V_{i}):= \{\pi(v_{i}), ... , \pi(v_{j})\}$, for any subset $V_{i} = \{v_{i}, ... , v_{j}\}$.
\end{definition}

These isometries induced from permutations define an explicit correspondence between the \emph{symmetric group} on the vertex set and the isometry group of the power set of the vertex set (as a metric space), specifically in the form of a subcollection of isometries characterized by only mapping subsets of a given size to other subsets of same size, due to the underlying bijections. This correspondence can be viewed as a rigorous group-theoretic embedding.

\begin{proposition}
    Define $()^{'}: \boldsymbol{Sym(V)} \rightarrow \boldsymbol{Iso(\mathcal{P}(V))}$, given by $(\pi)^{'}$ as defined above, for each $\pi \in \boldsymbol{Sym(V)}$. Then, $()^{'}$ is an injective group homomorphism.
\end{proposition}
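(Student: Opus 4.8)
The plan is to verify the three requirements in turn: that $(\pi)'$ genuinely lies in $\boldsymbol{Iso(\mathcal{P}(V))}$ for each $\pi$, that $()'$ respects composition (and identities), and that its kernel is trivial.

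First I would check that $(\pi)'$ is a bijective isometry, so that the codomain is correct. Bijectivity is immediate once one observes that $(\pi^{-1})'$ is a two-sided inverse: applying $\pi$ elementwise to a subset and then applying $\pi^{-1}$ elementwise recovers the original subset, since $\pi^{-1}\circ\pi=\mathrm{id}_V$, and symmetrically. For the isometry property the key observation is that a bijection of the ground set commutes with the Boolean set operations; in particular, for any $A,B\subseteq V$,
\[
(\pi)'(A)\ \Delta\ (\pi)'(B) = (\pi)'(A\ \Delta\ B),
\]
because $\pi$ injective means an element of the image lies in exactly one of $(\pi)'(A),(\pi)'(B)$ iff its $\pi$-preimage lies in exactly one of $A,B$. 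As $\pi$ is a bijection it preserves cardinality, so $|(\pi)'(A\ \Delta\ B)|=|A\ \Delta\ B|$, and hence $d((\pi)'(A),(\pi)'(B)) = |A\ \Delta\ B| = d(A,B)$. Thus $(\pi)'\in\boldsymbol{Iso(\mathcal{P}(V))}$ and $()'$ is well defined.

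Next I would establish the homomorphism property by a direct elementwise computation. For $\pi,\sigma\in\boldsymbol{Sym(V)}$ and any $V_i\subseteq V$,
\[
(\pi\circ\sigma)'(V_i) = \{(\pi\circ\sigma)(v):v\in V_i\} = \{\pi(\sigma(v)):v\in V_i\} = (\pi)'\bigl((\sigma)'(V_i)\bigr),
\]
so $(\pi\circ\sigma)' = (\pi)'\circ(\sigma)'$; and plainly $(\mathrm{id}_V)' = \mathrm{id}_{\mathcal{P}(V)}$, so $()'$ is a group homomorphism. Finally, for injectivity I would show the kernel is trivial: if $(\pi)'=\mathrm{id}_{\mathcal{P}(V)}$, then in particular $(\pi)'(\{v\})=\{v\}$ for every $v\in V$; but $(\pi)'(\{v\})=\{\pi(v)\}$, whence $\pi(v)=v$ for all $v$, i.e.\ $\pi=\mathrm{id}_V$. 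A group homomorphism with trivial kernel is injective, which finishes the argument. I do not anticipate a genuine obstacle here; the only point needing a moment's care is the commutation of $\pi$ (acting elementwise) with symmetric difference together with its preservation of cardinality, which is exactly what upgrades a vertex permutation to an isometry of the power-set metric.
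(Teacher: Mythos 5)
Your proof is correct and follows essentially the same route as the paper: the homomorphism property via the elementwise computation $(\pi\circ\sigma)'(V_i)=(\pi)'((\sigma)'(V_i))$, and injectivity by evaluating on singletons $\{v\}$ (your trivial-kernel phrasing is equivalent to the paper's direct argument that $(\pi_1)'=(\pi_2)'$ forces $\pi_1=\pi_2$). Your additional verification that $(\pi)'$ actually preserves the symmetric-difference metric — i.e.\ that the map is well defined into $\boldsymbol{Iso(\mathcal{P}(V))}$ — is a step the paper asserts in its definition but never proves, so that part is a welcome strengthening rather than a divergence.
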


\begin{proof}
    Take $(\pi_{1} \cdot \pi_{2})^{'}$ and $(\pi_{1})^{'} \circ (\pi_{2})^{'}$. 
    
    For any $V_{i} \in \mathcal{P}(V)$ with $V_{i} = \{v_{i}, ... ,v_{j}\}$, $(\pi_{1} \cdot \pi_{2})^{'}(V_{i}) = \{\pi_{1} \cdot \pi_{2}(v_{i}), ... , \pi_{1} \cdot \pi_{2}(v_{j})\}$, and $(\pi_{1})^{'} \circ (\pi_{2})^{'}(V_{i}) = (\pi_{1})^{'}(\{\pi_{2}(v_{i}), ... , \pi_{2}(v_{j})\}) = \{\pi_{1}(\pi_{2}(v_{i})), ... , \pi_{1}(\pi_{2}(v_{j}))\}= \{\pi_{1} \cdot \pi_{2}(v_{i}), ... , \pi_{1} \cdot \pi_{2}(v_{j})\}$, so $()^{'}$ is a group homomorphism. Furthermore, $()^{'}$ is injective. To see this, consider that if $(\pi_{1})^{'} = (\pi_{2})^{'}$ then both induced isometries have the exact same action on all subsets of vertices. Specifically, for every $v \in V$ we have $(\pi_{1})^{'}(\{v\}) = (\pi_{2})^{'}(\{v\})$, which is just $\{\pi_{1}(v)\} = \{\pi_{2}(v)\}$. Since $\pi_{1}$ and $\pi_{2}$ are bijections with the same action on every $v \in V$, we get that $\pi_{1} = \pi_{2}$.
\end{proof}

Now, since $()^{'}: \boldsymbol{Sym(V)} \hookrightarrow  \boldsymbol{Iso(\mathcal{P}(V))}$ is an injective group homomorphism, the image $\boldsymbol{(Sym(V))^{'}}$ defines a subgroup of $\boldsymbol{Iso(\mathcal{P}(V))}$. Thinking back to our encoding of a hypergraph with a Boolean-valued function $f: \mathcal{P}(V) \rightarrow \{\boldsymbol{0}, \boldsymbol{1}\}$, we need to look for elements of $\boldsymbol{(Sym(V))^{'}}$ that respect $f$. We define what it means for an isometry of $\boldsymbol{Iso(\mathcal{P}(V))}$ in general to respect such a function $f$, as well as the particular subgroup that these isometries define.

\begin{definition}
    Define $H:=\{\forall \phi \in \boldsymbol{Iso(\mathcal{P}(V))}|: f(\phi(V_{i}))= f(V_{i}) = f(\phi^{-1}(V_{i})) ;\forall V_{i} \in \mathcal{P}(V)\}$.
\end{definition}

\begin{proposition}
    $\boldsymbol{H}$ is a subgroup of $\boldsymbol{Iso(\mathcal{P}(V))}$.
\end{proposition}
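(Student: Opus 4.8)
The plan is to verify the three group axioms for $\boldsymbol{H}$ as a subset of $\boldsymbol{Iso(\mathcal{P}(V))}$: closure under composition, existence of the identity, and closure under taking inverses. Since $\boldsymbol{Iso(\mathcal{P}(V))}$ is already known to be a group under $\circ$, it suffices to show $\boldsymbol{H}$ is a nonempty subset closed under the group operation and inversion. The defining condition for membership in $\boldsymbol{H}$ is that $\phi$ fixes the value of $f$ on every subset, i.e. $f(\phi(V_i)) = f(V_i) = f(\phi^{-1}(V_i))$ for all $V_i \in \mathcal{P}(V)$; intuitively this says $\phi$ maps hyperedges to hyperedges and non-hyperedges to non-hyperedges (and likewise for $\phi^{-1}$).

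First I would check the identity map $\mathrm{id} \in \boldsymbol{Iso(\mathcal{P}(V))}$ lies in $\boldsymbol{H}$: since $\mathrm{id}(V_i) = V_i = \mathrm{id}^{-1}(V_i)$, we trivially get $f(\mathrm{id}(V_i)) = f(V_i) = f(\mathrm{id}^{-1}(V_i))$, so $\boldsymbol{H}$ is nonempty. Next, for closure, take $\phi, \psi \in \boldsymbol{H}$ and consider $\phi \circ \psi$. For any $V_i \in \mathcal{P}(V)$, applying the defining property of $\psi$ gives $f(\psi(V_i)) = f(V_i)$, and then applying the defining property of $\phi$ to the subset $\psi(V_i)$ gives $f(\phi(\psi(V_i))) = f(\psi(V_i)) = f(V_i)$. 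For the inverse side, $(\phi \circ \psi)^{-1} = \psi^{-1} \circ \phi^{-1}$, and one argues symmetrically: $f(\phi^{-1}(V_i)) = f(V_i)$ by the property of $\phi$, then $f(\psi^{-1}(\phi^{-1}(V_i))) = f(\phi^{-1}(V_i)) = f(V_i)$ by the property of $\psi$. Hence $\phi \circ \psi \in \boldsymbol{H}$.

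Finally, for closure under inverses, let $\phi \in \boldsymbol{H}$; I need $f(\phi^{-1}(V_i)) = f(V_i) = f((\phi^{-1})^{-1}(V_i)) = f(\phi(V_i))$ for all $V_i$. But $f(\phi^{-1}(V_i)) = f(V_i)$ and $f(\phi(V_i)) = f(V_i)$ are already among the defining conditions satisfied by $\phi$, so $\phi^{-1} \in \boldsymbol{H}$ immediately. The symmetry built into the definition of $\boldsymbol{H}$ — requiring the condition for both $\phi$ and $\phi^{-1}$ — is precisely what makes this step free; if the definition had only required $f(\phi(V_i)) = f(V_i)$, one would instead need to argue that $\phi$ bijectively permutes $f^{-1}(\boldsymbol{1})$ (which holds since $\phi$ is a bijection on the finite set $\mathcal{P}(V)$ and maps the finite set $f^{-1}(\boldsymbol{1})$ into itself), hence its inverse does too. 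I do not anticipate a genuine obstacle here; the only point requiring a little care is bookkeeping the two-sided condition correctly when composing, so that both $\phi \circ \psi$ and its inverse are checked. By the subgroup criterion, $\boldsymbol{H} \leq \boldsymbol{Iso(\mathcal{P}(V))}$.
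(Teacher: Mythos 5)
Your proposal is correct and follows essentially the same route as the paper's own proof: closure under inverses falls out of the two-sided (symmetric) defining condition, and closure under composition is obtained by chaining the condition through $\psi$ and then $\phi$ (and likewise for the inverse of the composite). The only addition is your explicit check that the identity lies in $\boldsymbol{H}$, which the paper leaves implicit but which is a worthwhile inclusion since nonemptiness is part of the subgroup criterion.
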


\begin{proof}
    Consider $\phi \in H$. Then for any $V_{i} \in \mathcal{P}(V)$, $f(\phi(V_{i})) = f(V_{i}) = f(\phi^{-1}(V_{i}))$. Clearly, $\phi^{-1} \in H$ by the symmetric aspect of the definition of $H$ with respect to $f$, which gives closure under inverses. For $\phi_{1}, \phi_{2} \in H$, consider $\phi_{1} \circ \phi_{2}$. Since $f(V_{i}) = f(\phi_{2}(V_{i})) = f(\phi_{1} \circ \phi_{2}(V_{i}))$ and $f(V_{i}) = f(\phi_{1}^{-1}(V_{i})) = f(\phi_{2}^{-1} \circ \phi_{1}^{-1}(V_{i}))$, then $f(\phi_{1} \circ \phi_{2}(V_{i})) = f(V_{i}) = f(\phi_{2}^{-1} \circ \phi_{1}^{-1}(V_{i}))$ for all $V_{i} \in V$. So we have that $\phi_{1} \circ \phi_{2}$ and $\phi_{2}^{-1} \circ \phi_{1}^{-1}$ both belong to H and their $f$ values are the same, which gives closure under the group operation. Thus, $\boldsymbol{H}$ is a subgroup.
\end{proof}

Furthermore, we want to specify a subgroup of $\boldsymbol{H}$ that solely consists of isometries induced by permutations of $\boldsymbol{Sym(V)}$.

\begin{definition}
    $\boldsymbol{(Sym(V))^{'}} \cap \boldsymbol{H}$ is defined as the automorphism group of our hypergraph.
\end{definition}

 From now on, we focus our attention to the action of $\boldsymbol{(Sym(V))^{'}} \cap \boldsymbol{H}$ with respect to $f^{-1}(\boldsymbol{1})$.

\section{Group-Theoretic Bound on the Chromatic Index}

\hspace{6mm}To recall, proving the Berge-F\"{u}redi conjecture means proving the inequality $q(V,E) \leq \Delta([(V,E)]_{2})+1$ for all linear (loopless) hypergraphs $(V,E)$ where $q(V,E)$ denotes the \emph{chromatic index} of $(V,E)$. This is associated with a \emph{proper}-meaning that any intersecting hyperedges must have different colors-\emph{minimal} surjective hyperedge coloring map $\bar{C}: f^{-1}(\boldsymbol{1}) \rightarrow C$ for a set of colors $C$, such that $|\bar{C}(f^{-1}(\boldsymbol{1}))|=q(V,E)$. The minimality condition means that $q(V,E)$ is the smallest value for which such a $\bar{C}$ exists. $[(V,E)]_{2}$ denotes the two-section graph formed by replacing each $V_{i} \in f^{-1}(\boldsymbol{1})$ with a complete graph on the vertices of the given hyperedge.

\vspace{2mm}

Consider a linear hypergraph $(V,E)$ encoded by a Boolean-valued function $f: \mathcal{P}(V) \rightarrow \{\boldsymbol{0},\boldsymbol{1}\}$. Take its hypergraph automorphism group $\boldsymbol{(Sym(V))^{'} \cap H} \subseteq \boldsymbol{Iso(\mathcal{P}(V))}$. The crux of deriving the main bound of the paper comes from defining a particular subgroup of $\boldsymbol{(Sym(V))^{'} \cap H}$ that respects the coloring information-for our proper minimal coloring $\bar{C}$-of $(V,E)$ wrapped up in the automorphism information. 

\vspace{2mm}

Note that there can be a collection of distinct groups, each associated to a distinct proper/minimal coloring function of $(V,E)$. However, for our case, the specific goal in mind is to identify the number of colors, not precisely how the colors are assigned to the hyperedges, so any minimal proper coloring function will suffice, as well as whatever group structure that comes with it.

\begin{definition}
    Let $T:=\{\forall (\rho)^{'} \in \boldsymbol{(Sym(V))^{'} \cap H}|: \bar{C}((\rho)^{'}(V_{i})) = \bar{C}(V_{i}) = \bar{C}((\rho^{-1})^{'}(V_{i})); 
    \forall V_{i} \in f^{-1}(\boldsymbol{1})\}$.
\end{definition}

\begin{proposition}
    $\boldsymbol{T}$ is a subgroup of $\boldsymbol{(Sym(V))^{'} \cap H}$.
\end{proposition}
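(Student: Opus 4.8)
The plan is to verify the three subgroup axioms—closure under the group operation, closure under inverses, and membership of the identity—for $\boldsymbol{T}$ as a subset of $\boldsymbol{(Sym(V))^{'} \cap H}$. The structure of the argument will closely mirror the proof already given for $\boldsymbol{H}$ being a subgroup of $\boldsymbol{Iso(\mathcal{P}(V))}$, since the defining condition for $\boldsymbol{T}$ has exactly the same shape as that for $\boldsymbol{H}$, except that $f$ is replaced by the coloring map $\bar{C}$ and the quantifier ranges over $f^{-1}(\boldsymbol{1})$ rather than all of $\mathcal{P}(V)$.

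First I would note that $\boldsymbol{T}$ is nonempty: the identity isometry $(\mathrm{id})^{'}$ lies in $\boldsymbol{(Sym(V))^{'} \cap H}$ (it is the image of the identity permutation, and trivially respects $f$), and it satisfies $\bar{C}((\mathrm{id})^{'}(V_{i})) = \bar{C}(V_{i})$ for every $V_{i} \in f^{-1}(\boldsymbol{1})$, so $(\mathrm{id})^{'} \in \boldsymbol{T}$. Next, for closure under inverses, I would take $(\rho)^{'} \in \boldsymbol{T}$; the defining condition is symmetric in $(\rho)^{'}$ and $(\rho^{-1})^{'}$ (it asserts $\bar{C}((\rho)^{'}(V_{i})) = \bar{C}(V_{i}) = \bar{C}((\rho^{-1})^{'}(V_{i}))$ for all $V_{i} \in f^{-1}(\boldsymbol{1})$), so reading it with $(\rho^{-1})^{'}$ in the role of the group element gives exactly $(\rho^{-1})^{'} \in \boldsymbol{T}$, using that $(\rho^{-1})^{'} = ((\rho)^{'})^{-1}$ by the homomorphism property from the earlier proposition. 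For closure under composition, I would take $(\rho_{1})^{'}, (\rho_{2})^{'} \in \boldsymbol{T}$ and chain the equalities: for any $V_{i} \in f^{-1}(\boldsymbol{1})$, since $(\rho_{2})^{'}$ maps hyperedges to hyperedges (it lies in $\boldsymbol{H}$), the element $(\rho_{2})^{'}(V_{i})$ is again in $f^{-1}(\boldsymbol{1})$, so the $\boldsymbol{T}$-condition for $(\rho_{1})^{'}$ applies to it, yielding $\bar{C}((\rho_{1})^{'} \circ (\rho_{2})^{'}(V_{i})) = \bar{C}((\rho_{2})^{'}(V_{i})) = \bar{C}(V_{i})$; the mirror computation with inverses gives $\bar{C}((\rho_{2}^{-1})^{'} \circ (\rho_{1}^{-1})^{'}(V_{i})) = \bar{C}(V_{i})$, and since $(\rho_{1})^{'} \circ (\rho_{2})^{'} = (\rho_{1}\rho_{2})^{'}$ with inverse $(\rho_{2}^{-1})^{'} \circ (\rho_{1}^{-1})^{'} = ((\rho_{1}\rho_{2})^{-1})^{'}$, this is precisely the statement that $(\rho_{1}\rho_{2})^{'} \in \boldsymbol{T}$.

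The one point requiring a little care—and the only place where the argument genuinely uses that we restricted to $\boldsymbol{(Sym(V))^{'} \cap H}$ rather than an arbitrary subset of isometries—is the observation that $(\rho_{2})^{'}(V_{i}) \in f^{-1}(\boldsymbol{1})$ whenever $V_{i} \in f^{-1}(\boldsymbol{1})$, which is exactly what membership in $\boldsymbol{H}$ buys us; without it, the defining condition for $(\rho_{1})^{'}$ could not be invoked at the point $(\rho_{2})^{'}(V_{i})$ and the composition argument would stall. I expect this to be the main (and essentially only) obstacle, and it is a mild one: it is resolved by recalling that $\boldsymbol{T} \subseteq \boldsymbol{H}$ by construction, so every element of $\boldsymbol{T}$ permutes $f^{-1}(\boldsymbol{1})$ among itself. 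Everything else is a direct transcription of the $\boldsymbol{H}$ subgroup proof, so I would keep it brief and not belabor the routine equality-chasing.
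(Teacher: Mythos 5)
Your proof is correct and follows essentially the same route as the paper's: inverses from the symmetric form of the defining condition, and composition by chaining the $\bar{C}$-equalities, mirroring the $\boldsymbol{H}$ subgroup argument. You are in fact slightly more careful than the paper, since you explicitly justify that $(\rho_{2})^{'}(V_{i})$ remains in $f^{-1}(\boldsymbol{1})$ (via membership in $\boldsymbol{H}$) before invoking the $\boldsymbol{T}$-condition for $(\rho_{1})^{'}$, and you verify nonemptiness via the identity, both of which the paper leaves implicit.
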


\begin{proof}
    This proof essentially follows the same line of reasoning as the proof of $\boldsymbol{H}$ being a subgroup. Firstly, the definition entails that $\bar{C}((\rho)^{'}(V_{i})) = \bar{C}(V_{i}) = \bar{C}((\rho^{-1})^{'}(V_{i}))$ for all $V_{i} \in f^{-1}(\boldsymbol{1})$, which automatically gives closure under inverses. Secondly, take $(\rho_{1})^{'}, (\rho_{2})^{'} \in \boldsymbol{T}$. Then for any $V_{i} \in f^{-1}(\boldsymbol{1})$, we have that $\bar{C}(V_{i}) = \bar{C}((\rho_{1})^{'}(V_{i})) = \bar{C}((\rho_{2} \cdot \rho_{1})^{'}(V_{i}))$ and $\bar{C}(V_{i}) = \bar{C}((\rho_{2}^{-1})^{'}(V_{i})) = \bar{C}((\rho_{1}^{-1} \cdot \rho_{2}^{-1})^{'}(V_{i}))$, so $\bar{C}((\rho_{2} \cdot \rho_{1})^{'}(V_{i})) = \bar{C}(V_{i}) = \bar{C}((\rho_{1}^{-1} \cdot \rho_{2}^{-1})^{'}(V_{i}))$. Thus, $(\rho_{2})^{'} \circ (\rho_{1})^{'}$ and $(\rho_{1}^{-1})^{'} \circ (\rho_{2}^{-1})^{'}$ are both in $T$ and have the same $\bar{C}$ values for all $V_{i} \in f^{-1}(\boldsymbol{1})$, giving closure of the operation. So $\boldsymbol{T}$ is a subgroup of $\boldsymbol{(Sym(V))^{'} \cap H}$.
\end{proof}

Notice that when considering the action of $\boldsymbol{T}$ on $f^{-1}(\boldsymbol{1})$, every orbit of the action must contain only elements of precisely the same color under $\bar{C}$. I.e. no two differently-colored hyperedges can exist in the same orbit of $\boldsymbol{T}$. It may be the case that two hyperedges with the same color are in different orbits. Consequently, the nature of the orbits as a partition of $f^{-1}(\boldsymbol{1})$ means that every color in $C$ is assigned to some hyperedge, which must in turn appear in some orbit. These observations give us the necessary information to derive the bound, as they indicate that the number of orbits of $f^{-1}(\boldsymbol{1})$ under $\boldsymbol{T}$ is at least the number of colors in $C$.

\begin{manualtheorem}{1}
    $q(V,E) \leq \frac{1}{|\boldsymbol{T}|} \cdot \sum_{t \in \boldsymbol{T}} |f^{-1}(\boldsymbol{1})^{t}|$.
\end{manualtheorem}

\begin{proof}
    The result is a simple application of Burnside's lemma, which equates the number of orbits of a group acting on a set (denoted by $|f^{-1}(\boldsymbol{1})/\boldsymbol{T}|$) with a particular summation formula $|f^{-1}(\boldsymbol{1})/\boldsymbol{T}| = \frac{1}{|\boldsymbol{T}|} \cdot \sum_{t \in \boldsymbol{T}} |f^{-1}(\boldsymbol{1})^{t}|$, where $|f^{-1}(\boldsymbol{1})^{t}|$ denotes the number of hyperedges in $f^{-1}(\boldsymbol{1})$ that are stabilized by the group element $t \in \boldsymbol{T}$. Since $q(V,E) = |\bar{C}(f^{-1}(\boldsymbol{1}))|$ is less than or equal to the number of orbits of $\boldsymbol{T}$, we arrive at the bound result.
\end{proof}

It is important to note that this bound is not so much computational in its usefulness, since it relies on the construction of a group which in turn relies on precise knowledge of the coloring map and its cardinality, which would already give you $q(V,E)$. Instead, its use is conceptual, in verifying that there are associations between hypergraphs paired with proper minimal coloring maps, and certain subgroups of their automorphism groups, with the number of orbits upper bounding the size of the color set. This idea is more abstract, in that one could imagine a space of these coloring maps, perhaps all defined on some superset of colors with an inclusion ordering, and then a corresponding space of groups perhaps ordered by subgroup inclusion in the overall automorphism group of the hypergraph. It is also interesting to consider the space of all proper coloring maps on a hypergraph, but include non-minimal maps. Then minimality defined by $q(V,E)$ becomes a line drawn through this space, with a corresponding line in the space of groups. There are most likely more nuanced relationships and properties to be gleaned here, using the rigorous group-theoretic connection to aid the hypergraph discoveries.

\vspace{2mm}

The next section is the main bulk of the paper, which lies in the combinatorial work, also centrally relying on the notion of $\bar{C}$ as a coloring map. There is a secondary "induced" coloring map on the vertex set that is derived, which allows for strategic reductions to the complexity of the vertex set with equivalence relations that respect said derived coloring map, and the hyperedge intersection structure. This leads to identifying key terms with which to express the main bounds. This theory is a different branch of the same conceptual tree as the group theory, the base of the tree being the idea of coloring maps. One simply differentiates the two branches by the constructions that are doing the "respecting" of the colorings. On the one hand we have the automorphisms permuting the vertex set that respect $\bar{C}$. And this section to come, we will have equivalence relations that respect the derived coloring map $\bar{C}^{*}$. Both yield upper bounds on the chromatic index.

\section{Combinatorial Upper Bounds on the Chromatic Index}

For the establishment of the following bounds, we rely on a study of color maps, associated equivalence relations, and hypergraphs derived from all of the resultant information, which possess the relevant terms of the conjecture as parts of their hypergraph attributes.

\begin{definition}
    Define the coloring map $\bar{C}^{*}: V \rightarrow C^{*}$, a coloring of the vertex set of our hypergraph which is given by $\bar{C}^{*}(v):=\{\bar{C}(V_{1}), ... , \bar{C}(V_{l})\}$, where $\{V_{1}, ... ,V_{l}\} = [\{v\}, V] \cap f^{-1}(\boldsymbol{1})$.
\end{definition}

In essence, colors of $C^{*}$ are subsets of colors of $C$, where a vertex is mapped to the induced color map $\bar{C}^{*}$ of $\bar{C}$ applied to the collection of all hyperedges containing said vertex, denoted by $[\{v\}, V] \cap f^{-1}(\boldsymbol{1})$-the intersection of the \emph{ultrafilter} of $v$ in $\mathcal{P}(V)$ and the hyperedge set.

\vspace{2mm}

We now can define the first corresponding equivalence relation.

\begin{definition}
    Let $(v_{1}, v_{2}) \in \hspace{1mm}\sim \iff [\{v_{1}\}, V] \cap f^{-1}(\boldsymbol{1}) = [\{v_{2}\}, V] \cap f^{-1}(\boldsymbol{1})$.
\end{definition}

This compresses together all vertices that say belong to a single hyperedge and no other. Hence, it encodes the information about the intersection patterns of the hypergraph into a compressed form.

\begin{proposition}
    $\sim$ is an equivalence relation on $V$.
\end{proposition}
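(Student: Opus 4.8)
The plan is to observe that $\sim$ is nothing more than the kernel of the function $g \colon V \to \mathcal{P}(f^{-1}(\boldsymbol{1}))$ given by $g(v) := [\{v\}, V] \cap f^{-1}(\boldsymbol{1})$, i.e.\ the map sending each vertex to the set of hyperedges containing it. A relation of the form ``$g(v_1) = g(v_2)$'' automatically inherits reflexivity, symmetry, and transitivity from the corresponding properties of set equality, so the proof reduces to a direct verification of the three axioms.

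Concretely, I would check them in order. For reflexivity, $[\{v\}, V] \cap f^{-1}(\boldsymbol{1}) = [\{v\}, V] \cap f^{-1}(\boldsymbol{1})$ holds trivially, so $(v,v) \in \sim$ for every $v \in V$. For symmetry, if $(v_1, v_2) \in \sim$ then $[\{v_1\}, V] \cap f^{-1}(\boldsymbol{1}) = [\{v_2\}, V] \cap f^{-1}(\boldsymbol{1})$; reading this identity right-to-left and using that set equality is symmetric gives $(v_2, v_1) \in \sim$. For transitivity, if $(v_1, v_2) \in \sim$ and $(v_2, v_3) \in \sim$, then $[\{v_1\}, V] \cap f^{-1}(\boldsymbol{1}) = [\{v_2\}, V] \cap f^{-1}(\boldsymbol{1})$ and $[\{v_2\}, V] \cap f^{-1}(\boldsymbol{1}) = [\{v_3\}, V] \cap f^{-1}(\boldsymbol{1})$, and chaining these equalities yields $[\{v_1\}, V] \cap f^{-1}(\boldsymbol{1}) = [\{v_3\}, V] \cap f^{-1}(\boldsymbol{1})$, i.e.\ $(v_1, v_3) \in \sim$.

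There is essentially no obstacle here: the statement is a formality, and the only point worth recording is the conceptual one that $\sim$ is the partition of $V$ into the fibers of the ``hyperedges-through-a-vertex'' map, which is precisely why it interacts cleanly with the induced vertex coloring $\bar{C}^{*}$ (two $\sim$-equivalent vertices lie in exactly the same hyperedges, hence receive the same $\bar{C}^{*}$-color). If anything, the step needing the most care is simply reading $[\{v\}, V] \cap f^{-1}(\boldsymbol{1})$ correctly as ``the set of hyperedges $V_i \in f^{-1}(\boldsymbol{1})$ with $v \in V_i$,'' after which the argument is immediate.
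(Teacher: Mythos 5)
Your proof is correct and takes the same route as the paper: both observe that $\sim$ is defined by an equality of sets (equivalently, as the kernel of the map $v \mapsto [\{v\}, V] \cap f^{-1}(\boldsymbol{1})$), so reflexivity, symmetry, and transitivity are inherited directly from set equality. Your version merely spells out the three verifications that the paper leaves implicit.
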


\begin{proof}
    $\sim$ is bi-conditionally identified with the set equality relation, so it inherits reflexivity, symmetry, and transitivity naturally.
\end{proof}

\vspace{2mm}

We can then consider the coloring map $C^{*}$ induced on the quotient structure $V/\sim$, since any two vertices in the same equivalence class by definition have the same set of hyperedges intersecting at both of them, which in this case would have to be a single hyperedge, due to linearity, so possess the same $C^{*}$ values.

\vspace{2mm}

Now, we introduce the crucial definitions that serve as a way to contain our problem.

\vspace{2mm}

We can divide the overall problem into two central cases. 

\begin{enumerate}
    \item The first is when the "flower" of hyperedges $[\{v\}, V] \cap f^{-1}(\boldsymbol{1})$), for a fixed $v$ with $deg_{2}(v) = \Delta([(V,E)]_{2})$, is such that $\bar{C}^{*}([v]_{\sim})$ contains all colors comprising $C$, and in the quantified sense evaluates to all of the colors adding to the value of $q(V,E)$. This would mean that $|\bar{C^{*}}([v]_{\sim})| = \Delta(V,E) = q(V,E)$, as otherwise we would get some vertex with a flower of hyperedges containing it, possessing the max degree of the hypergraph (so with larger hypergraph degree than $v$), meaning there would be additional hyperedges needing additional colors not in $\bar{C^{*}}([v]_{\sim})$. This case immediately entails that $q(V,E) = \Delta(V,E) < \Delta([(V,E)]_{2}) + 1$, so the conjecture holds trivially.

    \item The second case is when $\exists c_{0} \in C$ such that $c_{0} \notin \bar{C^{*}}([v]_{\sim})$, which entails a much more complex mathematical situation where the problem remains open. This is the logical case we focus on for the rest of the paper, where the assumption is that $\exists c_{0} \notin \bar{C^{*}}([v]_{\sim})$. All main results of this material fall within this case logical case.
\end{enumerate}

\begin{definition}
    Define $\Omega:=\{\forall [u]_{\sim} \in V/\sim|: \exists c_{i}, V_{i}, \hspace{1mm} \bar{C}(V_{i}) = c_{i} \in \bar{C^{*}}([u]_{\sim}) \land c_{i} \notin \bar{C^{*}}([v]_{\sim})\}$.
\end{definition}

\begin{definition}
    Let $([v_{1}]_{\sim}, [v_{2}]_{\sim}) \in \theta \iff \bar{C^{*}}([v_{1}]_{\sim}) = \bar{C}^{*}([v_{2}]_{\sim})$, where $\theta$ is defined specifically on the set $\Omega$.
\end{definition}

\begin{proposition}
    $\theta$ is an equivalence relation on $\Omega$.
\end{proposition}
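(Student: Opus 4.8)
The plan is to observe that $\theta$ is nothing more than the kernel (fiber) relation of a well-defined function, restricted to a subset, and that both operations preserve the property of being an equivalence relation. First I would recall that $\bar{C}^{*}$ descends to a well-defined map on the quotient $V/\!\sim$: this was already argued in the excerpt, since any two $\sim$-equivalent vertices lie in exactly the same collection $[\{v\},V]\cap f^{-1}(\boldsymbol{1})$ of hyperedges (a single hyperedge, by linearity, in the nontrivial case), hence have identical $\bar{C}^{*}$-values. Thus the assignment $[v]_{\sim}\mapsto \bar{C}^{*}([v]_{\sim})\in \mathcal{P}(C)$ is a genuine function on $V/\!\sim$.

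Next I would note that the relation ``$\bar{C}^{*}([v_{1}]_{\sim}) = \bar{C}^{*}([v_{2}]_{\sim})$'' on all of $V/\!\sim$ is precisely the equivalence relation induced by this function, i.e. two classes are related exactly when they map to the same element of $\mathcal{P}(C)$. Since it is biconditionally identified with the equality relation on $\mathcal{P}(C)$ pulled back along a function, it inherits reflexivity, symmetry, and transitivity directly from those of set equality — this is the same style of argument already used to show $\sim$ is an equivalence relation.

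Finally, $\theta$ is defined to be this relation restricted to the subset $\Omega \subseteq V/\!\sim$. The restriction of an equivalence relation on a set $X$ to any subset $\Omega\subseteq X$ is again an equivalence relation on $\Omega$: reflexivity holds for every element of $\Omega$, and symmetry and transitivity are inherited since any elements witnessing them already lie in $\Omega$. Hence $\theta$ is an equivalence relation on $\Omega$.

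There is essentially no hard step here; the only point requiring a moment's care is the well-definedness of $\bar{C}^{*}$ on $V/\!\sim$, and that has already been established in the preceding discussion, so the proof is a short formality.
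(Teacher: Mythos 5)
Your proof is correct and follows essentially the same route as the paper, which likewise observes that $\theta$ is defined biconditionally via set equality and therefore inherits reflexivity, symmetry, and transitivity. Your additional remarks on the well-definedness of $\bar{C}^{*}$ on $V/\!\sim$ and on restriction to the subset $\Omega$ are sound elaborations of details the paper leaves implicit.
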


\begin{proof}
    Since $\theta$ is defined bi-conditionally in terms of set equality, it inherits the reflexivity, symmetry, and transitivity.
\end{proof}

This equivalence relation serves to shave off any redundancy in the subset color set of $C^{*}$. For example, we could have sufficient symmetry in the hypergraph and a coloring such that two distinct flowers of hyperedges are colored the same, and in a relative sense there is redundancy in the usage of colors, or at least not a need for additional colors in the two regions. Also, $\bar{C^{*}}$ can be induced as a coloring map on $\Omega/\theta$ as well, since we still have that every pair of elements in a given equivalence class of $\Omega/\theta$ will have the same coloring under $\bar{C^{*}}$.

We will now identify specific subsets of of the quotient set $\Omega/\theta$ each with respect to a fixed color not in $\bar{C^{*}}([v]_{\sim})$. Mainly, since we assume for this case that $\exists c_{0} \in C$ such that $c_{0} \notin \bar{C^{*}}([v]_{\sim})$, fix this color $c_{0}$. If there is only this one additional color, then $q(V,E) = |\bar{C^{*}}([v]_{\sim})| + 1 \leq \Delta(V,E) +1$, which still entails that the conjecture holds here. So, we will assume that there are potentially many more colors apart from just $c_{0}$ that are not in $\bar{C^{*}}([v]_{\sim})$.

\begin{proposition}
    Consider $c_{i} \in C$ such that $c_{i} \neq c_{0}$ and $c_{i} \notin \bar{C^{*}}([v]_{\sim})$. Then there exists a $u_{i} \in V$ such that $[[u_{i}]_{\sim}]_{\theta} \in \Omega/\theta$ and $\{c_{0}, c_{i}\} \subseteq \bar{C^{*}}([[u_{i}]_{\sim}]_{\theta})$. 
\end{proposition}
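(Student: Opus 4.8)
The plan is to exploit the minimality of $\bar{C}$ via a color-merging argument. Write $E_{c_0}:=\bar{C}^{-1}(c_0)$ and $E_{c_i}:=\bar{C}^{-1}(c_i)$; both are nonempty since $\bar{C}$ is surjective onto $C$ and $c_0,c_i\in C$. The first step is to show that some hyperedge of $E_{c_0}$ intersects some hyperedge of $E_{c_i}$. Suppose not; recolor every hyperedge in $E_{c_i}$ with the color $c_0$, leave all other hyperedges unchanged, and call the result $\bar{C}'$. This $\bar{C}'$ is still proper: if two intersecting hyperedges received the same color $c$ under $\bar{C}'$, then $c=c_0$ (otherwise neither was recolored, so $\bar{C}$ was already improper), and then each of the two lies in $E_{c_0}$ or in $E_{c_i}$, giving three cases -- both in $E_{c_0}$, both in $E_{c_i}$, or one in each -- of which the first two contradict properness of $\bar{C}$ and the third contradicts our supposition. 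But $\bar{C}'$ uses only the colors $C\setminus\{c_i\}$ (every color other than $c_i$ is still used, $c_0$ in particular since $E_{c_0}\neq\emptyset$), hence is a proper coloring with $q(V,E)-1$ colors, contradicting minimality. So the desired intersecting pair exists.

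Next, fix $V_0\in E_{c_0}$ and $V_i\in E_{c_i}$ with $V_0\cap V_i\neq\emptyset$. Since $(V,E)$ is linear, $|V_0\cap V_i|\le 1$, hence $|V_0\cap V_i|=1$; let $u_i$ be the unique vertex of $V_0\cap V_i$. Then $V_0,V_i\in[\{u_i\},V]\cap f^{-1}(\boldsymbol{1})$, so by the definition of $\bar{C}^{*}$ one has $\{c_0,c_i\}=\{\bar{C}(V_0),\bar{C}(V_i)\}\subseteq\bar{C}^{*}(u_i)$, and since $\bar{C}^{*}$ is well defined on $V/\!\sim$ this gives $\{c_0,c_i\}\subseteq\bar{C}^{*}([u_i]_{\sim})$.

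Finally I would check that $[u_i]_{\sim}\in\Omega$: with $c_i$ and $V_i$ serving as the witnesses in the definition of $\Omega$, we have $\bar{C}(V_i)=c_i\in\bar{C}^{*}([u_i]_{\sim})$ while $c_i\notin\bar{C}^{*}([v]_{\sim})$ by the hypothesis of the proposition. Hence $[[u_i]_{\sim}]_{\theta}$ is a well-defined element of $\Omega/\theta$, and because $\theta$ identifies classes having equal $\bar{C}^{*}$-image, the induced coloring satisfies $\bar{C}^{*}([[u_i]_{\sim}]_{\theta})=\bar{C}^{*}([u_i]_{\sim})\supseteq\{c_0,c_i\}$, which is the assertion. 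The one delicate point is the properness verification in the merging step -- one must be sure that collapsing the color classes $c_0$ and $c_i$ creates no monochromatic intersecting pair, which is exactly what the for-contradiction disjointness assumption forbids; everything downstream is routine bookkeeping with the quotients $\sim$ and $\theta$ and the induced map $\bar{C}^{*}$, each well defined by the earlier propositions.
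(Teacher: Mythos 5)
Your proposal is correct and follows essentially the same route as the paper's own proof: a color-merging contradiction using minimality of $\bar{C}$ to produce an intersecting pair of hyperedges colored $c_{0}$ and $c_{i}$, then taking $u_{i}$ in their intersection. You simply spell out two details the paper leaves implicit (the properness check for the merged coloring and the verification that $[u_{i}]_{\sim}\in\Omega$), which is a welcome tightening but not a different argument.
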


\begin{proof}
    Firstly, suppose for a contradiction that every hyperedge $V_{i}$ with $\bar{C}(V_{i}) = c_{i}$ does not intersect with any hyperedges $V_{j}^{0}$ with $\bar{C}(V_{j}^{0}) = c_{0}$. Then due to the minimality of $\bar{C}$ we could simply color all hyperedge representatives of $c_{i}$ with $c_{0}$ instead, and have no proper coloring violations while reducing the number of colors needed. So, we have via contradiction that $\exists V_{i}$ colored with $c_{i}$ and that intersects with a hyperedge $V_{j}^{0}$ with $\bar{C}(V_{j}^{0}) = c_{0}$. This means that $\exists u_{i}$ with $u_{i} \in V_{i} \cap V_{j}^{0}$ meaning $\{c_{0}, c_{i}\} \subseteq \bar{C^{*}}([[u_{i}]_{\sim}]_{\theta})$. The same reasoning can be applied to all colors $c_{k} \neq c_{0}$ and with $c_{k} \notin \bar{C^{*}}([v]_{\sim})$.
\end{proof}

With this proposition in hand, we can refine to a set smaller than $\Omega/\theta$.

\begin{definition}
    For our fixed color $c_{0}$, define $\Gamma_{c_{0}}:=\{\forall [[u_{i}]_{\sim}]_{\theta} \in \Omega/\theta|: \exists c_{i} \notin \bar{C^{*}}([v]_{\sim}) \land c_{i} \neq c_{0} \land \{c_{0}, c_{i}\} \subseteq \bar{C^{*}}([[u_{i}]_{\sim}]_{\theta})\}$.
 \end{definition}

There is a naturally defined hypergraph, most typically non-linear and non-uniform depending on the underlying $(V,E)$, which via its typical hypergraph properties expresses and relates $q(V,E)$ and $\Delta([(V,E]_{2}) + 1$, as well as other key quantities surrounding the problem.

\begin{definition}
    Define the hypergraph $H^{*}_{\Gamma_{c_{0}}}:= (\cup_{[[u_{i}]_{\sim}]_{\theta} \in \Gamma_{c_{0}}} \bar{C^{*}}([[u_{i}]_{\sim}]_{\theta}), \bar{C^{*}}(\Gamma_{c_{0}}))$.
\end{definition}

$\bar{C^{*}}(\Gamma_{c_{0}})$ denotes the point-wise application of $\bar{C^{*}}$ on the collection of elements of $\Gamma$. This hypergraph has colors of $C$ as its vertices, and color subsets for hyperedges. Note first that the hypergraph has no duplicate hyperedges, since we are dealing with a subset of $\Omega/\theta$, which in itself has all distinct color subsets assigned to its elements by definition. Moreover, it is loopless, in that every $|\bar{C^{*}}([[u_{i}]_{\sim}]_{\theta})| > 1$, containing $c_{0}$ and some other $c_{i}$. It not likely to be linear or uniform, but those are not particularly prudent conditions to hold over such a structure. It has a hypergraph maximum degree $\Delta(H^{*}_{\Gamma_{c_{0}}}) = |\Gamma_{c_{0}}|$, corresponding to the point of intersection $c_{0}$, which lies at the intersection of all the hyperedges. It has a hypergraph \emph{rank} (the size of the largest hyperedge) which is equal to or less than $\Delta(V,E)$, which upper bounds all the color subsets assigned by $\bar{C^{*}}$. There are three propositions that can help connect this hypergraph to $q(V,E)$ very directly.

Before the following results, there are two connected inequalities that will be used concerning $\Delta([(V,E)]_{2})$.

\begin{lemma}
    For a linear loopless hypergraph $(V,E)$, we have:
    
    $(ar(V,E) - 1) \cdot \Delta(V,E) \leq \Delta([(V,E)]_{2}) \leq (r(V,E) - 1) \cdot \Delta(V,E)$.
\end{lemma}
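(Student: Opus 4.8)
The plan is to reduce both inequalities to the definitional formula $deg_{2}(u) = \sum_{V_{i} \in [\{u\},V]\cap E}(|V_{i}|-1)$ recorded in the Notation section. That formula already encodes the only use of linearity we need: the neighbours that a vertex $u$ acquires from two distinct hyperedges through $u$ overlap only in $u$ itself, so the per-hyperedge contributions $|V_{i}|-1$ add up without double counting, and $\Delta([(V,E)]_{2})$ is obtained by maximizing this sum over $u \in V$.

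For the upper bound, I would fix a vertex $u^{*}$ with $deg_{2}(u^{*}) = \Delta([(V,E)]_{2})$. Every hyperedge $V_{i}$ containing $u^{*}$ satisfies $|V_{i}| \le r(V,E)$, so each summand obeys $|V_{i}|-1 \le r(V,E)-1$; since there are exactly $deg(u^{*})$ such hyperedges, $\Delta([(V,E)]_{2}) = \sum_{V_{i} \ni u^{*}}(|V_{i}|-1) \le deg(u^{*})\,(r(V,E)-1) \le \Delta(V,E)\,(r(V,E)-1)$, the last step using $deg(u^{*})\le\Delta(V,E)$.

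For the lower bound, I would instead fix a vertex $w^{*}$ of maximum hypergraph degree, $deg(w^{*}) = \Delta(V,E)$. Each of its $\Delta(V,E)$ incident hyperedges has size at least $ar(V,E)$, hence contributes at least $ar(V,E)-1$ to $deg_{2}(w^{*})$; therefore $\Delta([(V,E)]_{2}) \ge deg_{2}(w^{*}) = \sum_{V_{i} \ni w^{*}}(|V_{i}|-1) \ge \Delta(V,E)\,(ar(V,E)-1)$, where the first inequality is just the definition of $\Delta([(V,E)]_{2})$ as a maximum of $deg_{2}$ over vertices.

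Since each direction is a single-line estimate, I do not expect a serious obstacle; the only point worth spelling out is why $deg_{2}(u)$ has the stated additive form, i.e. why linearity makes the sets $V_{i}\setminus\{u\}$ pairwise disjoint. If I wanted the lemma self-contained rather than quoting the Notation formula, I would establish this first: for $V_{i}\neq V_{j}$ both containing $u$, linearity gives $|V_{i}\cap V_{j}|\le 1$, hence $V_{i}\cap V_{j}=\{u\}$ and $(V_{i}\setminus\{u\})\cap(V_{j}\setminus\{u\})=\emptyset$, so the $2$-section neighbourhood of $u$ is the disjoint union $\bigcup_{V_{i}\ni u}(V_{i}\setminus\{u\})$ and its cardinality is $\sum_{V_{i}\ni u}(|V_{i}|-1)$. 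The loopless hypothesis ($|V_{i}|>1$) then guarantees $ar(V,E)-1\ge 1$, so both bounds are nontrivial.
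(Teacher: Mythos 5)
Your proposal is correct and follows essentially the same route as the paper: the paper phrases both directions as proofs by contradiction, but the underlying estimates are identical — a vertex of maximum hypergraph degree has $deg_{2}$ at least $(ar(V,E)-1)\cdot\Delta(V,E)$, and a vertex of maximum two-section degree has $deg_{2}$ at most $(r(V,E)-1)\cdot\Delta(V,E)$. Your direct version is cleaner, and your explicit justification that linearity makes the sets $V_{i}\setminus\{u\}$ pairwise disjoint (so that $deg_{2}(u)$ really is the additive sum) is a point the paper leaves implicit.
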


\begin{proof}
    Suppose $(ar(V,E) - 1) \cdot \Delta(V,E) >\Delta([(V,E)]_{2})$. Then this would imply any vertex with max degree $\Delta(V,E)$ in the hypergraph would have greater two-section degree than $\Delta([(V,E)]_{2})$, since $deg_{2}(u):=\sum_{V_{i} \in E} (|V_{i}| - 1)$ and every hyperedge has to be at least the antirank-hence a contradiction. If we suppose $\Delta([(V,E)]_{2}) > (r(V,E) - 1) \cdot \Delta(V,E)$, then this would imply that for $v \in V$ with $deg_{2}(v) = \Delta([(V,E)]_{2})$ has more vertices in the flower of hyperedges containing it than a vertex $u$ with max degree $\Delta(V,E)$ and with all hyperedges size $r(V,E)$, indicating that the hypergraph degree of $v$ is greater than that of $u$-a contradiction as well.
\end{proof}

For the following propositions, it is interesting to point out that in terms of serving as upper bounds to $q(V,E)$, the right-hand side of the first inequality is most tight as an upper bound, and each gets less so, in sequential order. So through the goal of bringing the $\Delta([(V,E)]_{2})$ term into the right hand side of the inequalities, we must make some admissions in terms of bound tightness.

\begin{proposition}
    $q(V,E) \leq |\bar{C^{*}}([v]_{\sim})| + |\cup_{[[u_{i}]_{\sim}]_{\theta} \in \Gamma} \bar{C^{*}}([[u_{i}]_{\sim}]_{\theta})|$.
\end{proposition}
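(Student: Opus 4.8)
The plan is to show that the colors of $C$ partition into two groups: those that appear in $\bar{C^{*}}([v]_{\sim})$, and those that do not. The first group contributes at most $|\bar{C^{*}}([v]_{\sim})|$ to $q(V,E)$. The claim is that every color of the second group appears in $\cup_{[[u_{i}]_{\sim}]_{\theta} \in \Gamma_{c_0}} \bar{C^{*}}([[u_{i}]_{\sim}]_{\theta})$, so that the second group contributes at most $|\cup_{[[u_{i}]_{\sim}]_{\theta} \in \Gamma} \bar{C^{*}}([[u_{i}]_{\sim}]_{\theta})|$.

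First I would recall that $q(V,E) = |\bar{C}(f^{-1}(\boldsymbol{1}))| = |C|$, since $\bar{C}$ is surjective onto the color set $C$. So it suffices to bound $|C|$. Write $C = \bar{C^{*}}([v]_{\sim}) \ \dot\cup \ D$, where $D$ is the set of colors not appearing on any hyperedge through $v$ (equivalently, $D = C \setminus \bar{C^{*}}([v]_{\sim})$). Then $|C| = |\bar{C^{*}}([v]_{\sim})| + |D|$, so the statement reduces to showing $|D| \leq |\cup_{[[u_{i}]_{\sim}]_{\theta} \in \Gamma} \bar{C^{*}}([[u_{i}]_{\sim}]_{\theta})|$.

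Next I would argue $D \subseteq \cup_{[[u_{i}]_{\sim}]_{\theta} \in \Gamma_{c_0}} \bar{C^{*}}([[u_{i}]_{\sim}]_{\theta})$. Two subcases: the fixed color $c_0 \in D$ itself, and any other $c_i \in D$ with $c_i \neq c_0$. For the latter, the Proposition just proved (the one giving a $u_i \in V$ with $\{c_0, c_i\} \subseteq \bar{C^{*}}([[u_i]_{\sim}]_{\theta})$) shows that the corresponding class $[[u_i]_{\sim}]_{\theta}$ lies in $\Gamma_{c_0}$ by the very definition of $\Gamma_{c_0}$, and $c_i$ belongs to $\bar{C^{*}}([[u_i]_{\sim}]_{\theta})$, hence to the union. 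For $c_0$ itself: provided $D$ contains at least one color other than $c_0$, that same class witnesses $c_0 \in \bar{C^{*}}([[u_i]_{\sim}]_{\theta})$ as well, so $c_0$ is in the union too; and if $D = \{c_0\}$, the conjecture already holds trivially as remarked in the text, so this edge case can be dispatched separately (or one notes $\Gamma_{c_0} = \emptyset$ there and the inequality $q(V,E) \le |\bar C^*([v]_\sim)| + 1$ is handled independently). Taking cardinalities of the inclusion $D \subseteq \cup_{[[u_i]_\sim]_\theta \in \Gamma} \bar{C^{*}}([[u_i]_\sim]_\theta)$ then gives the result.

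The main obstacle I anticipate is purely bookkeeping: making sure the degenerate case $D = \{c_0\}$ (and the even more degenerate $D = \emptyset$, which is Case 1 of the dichotomy and is excluded by hypothesis) is handled cleanly so that the union over $\Gamma_{c_0}$ is genuinely nonempty and actually contains $c_0$; the substantive content is entirely carried by the preceding Proposition, whose minimality-of-$\bar{C}$ recoloring argument does the real work. No calculation beyond comparing set cardinalities is needed.
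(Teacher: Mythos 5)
Your proposal is correct and follows essentially the same route as the paper: decompose $C$ into the colors in $\bar{C^{*}}([v]_{\sim})$ and those outside it, and use the preceding Proposition to show the latter are all covered by $\cup_{[[u_{i}]_{\sim}]_{\theta} \in \Gamma_{c_{0}}} \bar{C^{*}}([[u_{i}]_{\sim}]_{\theta})$. Your treatment is actually more careful than the paper's one-sentence argument, since you explicitly verify the inclusion $D \subseteq \cup \bar{C^{*}}(\cdot)$ and handle the degenerate case $D=\{c_{0}\}$, which the paper dispatches only in the surrounding discussion.
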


\begin{proof}
    This formula simply involves comparing the number of elements in $C$ with the right hand side of the inequality, which counts all colors in $\bar{C^{*}}([v]_{\sim})$, plus all colors not in $\bar{C^{*}}([v]_{\sim})$ (but with some potential repeats of colors in $\bar{C^{*}}([v]_{\sim})$-hence the inequality).
\end{proof}

\begin{proposition}
    $q(V,E) \leq |\bar{C^{*}}([v]_{\sim})| + \Delta([H^{*}_{\Gamma_{c_{0}}}]_{2}) + 1$.
\end{proposition}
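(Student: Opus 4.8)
The plan is to deduce this from the immediately preceding proposition together with one structural observation about the vertex $c_{0}$ inside the derived hypergraph $H^{*}_{\Gamma_{c_{0}}}$: by construction $c_{0}$ lies on \emph{every} hyperedge of $H^{*}_{\Gamma_{c_{0}}}$, so in the two-section $[H^{*}_{\Gamma_{c_{0}}}]_{2}$ it is adjacent to all other vertices. This lets us trade the vertex count of $H^{*}_{\Gamma_{c_{0}}}$ for a degree of $[H^{*}_{\Gamma_{c_{0}}}]_{2}$, at the cost of an additive $1$.

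First I would record that the vertex set of $H^{*}_{\Gamma_{c_{0}}}$ is exactly $\bigcup_{[[u_{i}]_{\sim}]_{\theta} \in \Gamma_{c_{0}}} \bar{C^{*}}([[u_{i}]_{\sim}]_{\theta})$, so the previous proposition reads
\[
q(V,E) \leq |\bar{C^{*}}([v]_{\sim})| + |V(H^{*}_{\Gamma_{c_{0}}})|.
\]
It therefore suffices to prove $|V(H^{*}_{\Gamma_{c_{0}}})| \leq \Delta([H^{*}_{\Gamma_{c_{0}}}]_{2}) + 1$. The degenerate situation in which $c_{0}$ is the only color outside $\bar{C^{*}}([v]_{\sim})$ has already been handled above, so we may assume there is at least one further color $c_{i} \notin \bar{C^{*}}([v]_{\sim})$; by the preceding existence proposition this $c_{i}$ occurs together with $c_{0}$ in some $\bar{C^{*}}([[u_{i}]_{\sim}]_{\theta}) \in \bar{C^{*}}(\Gamma_{c_{0}})$, so $\Gamma_{c_{0}} \neq \emptyset$ and $H^{*}_{\Gamma_{c_{0}}}$ has at least one hyperedge.

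Next I would verify that $c_{0}$ belongs to every hyperedge of $H^{*}_{\Gamma_{c_{0}}}$. A hyperedge has the form $\bar{C^{*}}([[u_{i}]_{\sim}]_{\theta})$ with $[[u_{i}]_{\sim}]_{\theta} \in \Gamma_{c_{0}}$, and the defining condition of $\Gamma_{c_{0}}$ supplies a color $c_{i} \neq c_{0}$ with $\{c_{0}, c_{i}\} \subseteq \bar{C^{*}}([[u_{i}]_{\sim}]_{\theta})$; in particular $c_{0} \in \bar{C^{*}}([[u_{i}]_{\sim}]_{\theta})$. Hence any other vertex $c \in V(H^{*}_{\Gamma_{c_{0}}})$ lies in some hyperedge of $H^{*}_{\Gamma_{c_{0}}}$ that also contains $c_{0}$, so $c$ and $c_{0}$ are joined by an edge of $[H^{*}_{\Gamma_{c_{0}}}]_{2}$. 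Thus $c_{0}$ is adjacent in $[H^{*}_{\Gamma_{c_{0}}}]_{2}$ to each of the remaining $|V(H^{*}_{\Gamma_{c_{0}}})| - 1$ vertices, so its degree there equals $|V(H^{*}_{\Gamma_{c_{0}}})| - 1$, whence $\Delta([H^{*}_{\Gamma_{c_{0}}}]_{2}) \geq |V(H^{*}_{\Gamma_{c_{0}}})| - 1$. Substituting into the displayed inequality gives $q(V,E) \leq |\bar{C^{*}}([v]_{\sim})| + \Delta([H^{*}_{\Gamma_{c_{0}}}]_{2}) + 1$.

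The argument is genuinely short, so the only point requiring care — the ``main obstacle'' — is the bookkeeping around emptiness and coverage: one must confirm, using the case split made before the definition of $\Gamma_{c_{0}}$ and the preceding existence proposition, that $\Gamma_{c_{0}}$ is nonempty exactly when there is more than the single extra color $c_{0}$, and that every color outside $\bar{C^{*}}([v]_{\sim})$ really does appear as a vertex of $H^{*}_{\Gamma_{c_{0}}}$, so that nothing is lost in passing from the color count to $|V(H^{*}_{\Gamma_{c_{0}}})|$. Once that is secured, the universal-vertex property of $c_{0}$ closes the estimate.
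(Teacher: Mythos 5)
Your proof is correct and follows essentially the same route as the paper's: both exploit that $c_{0}$ lies in every hyperedge of $H^{*}_{\Gamma_{c_{0}}}$, so that its degree in $[H^{*}_{\Gamma_{c_{0}}}]_{2}$ plus one accounts for every vertex of $H^{*}_{\Gamma_{c_{0}}}$, and then combine this with the preceding proposition. Your write-up is a bit more careful about the nonemptiness of $\Gamma_{c_{0}}$ and only uses the inequality $\Delta([H^{*}_{\Gamma_{c_{0}}}]_{2}) \geq \deg(c_{0})$ where the paper asserts equality, but the underlying idea is identical.
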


\begin{proof}
    Since we essentially have a star-shaped/flower hypergraph in $H^{*}_{\Gamma_{c_{0}}}$ where all hyperedges intersect with the fixed $c_{0}$, the two-section degree of $c_{0}$ as a vertex in $H^{*}_{\Gamma_{c_{0}}}$ is equal to $\Delta([H^{*}_{\Gamma_{c_{0}}}]_{2})$, and since clearly the two-section degree of $c_{0}$ plus $1$ counts precisely all colors of $\cup_{[[u_{i}]_{\sim}]_{\theta} \in \Gamma_{c_{0}}} \bar{C^{*}}([[u_{i}]_{\sim}]_{\theta})$, then the inequality follows suit.
\end{proof}
    
\begin{proposition}
    $q(V,E) \leq |\bar{C}^{*}([v]_{\sim})| + (\Delta(V,E) - 1) \cdot |\Gamma_{c_{0}}| + 1$.
\end{proposition}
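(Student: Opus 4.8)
The plan is to combine the preceding proposition, namely $q(V,E) \leq |\bar{C^{*}}([v]_{\sim})| + \Delta([H^{*}_{\Gamma_{c_{0}}}]_{2}) + 1$, with an estimate of the form $\Delta([H^{*}_{\Gamma_{c_{0}}}]_{2}) \leq (\Delta(V,E) - 1)\cdot|\Gamma_{c_{0}}|$; substituting the latter into the former yields exactly the claimed inequality. So the whole task reduces to bounding the two-section maximum degree of the flower hypergraph $H^{*}_{\Gamma_{c_{0}}}$.

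First I would recall two facts about $H^{*}_{\Gamma_{c_{0}}}$ already observed in the text. (a) It is star-shaped with centre $c_{0}$: by the definition of $\Gamma_{c_{0}}$, every element $[[u_{i}]_{\sim}]_{\theta}$ of $\Gamma_{c_{0}}$ has $c_{0} \in \bar{C^{*}}([[u_{i}]_{\sim}]_{\theta})$, so the vertex $c_{0}$ belongs to every one of the $|\Gamma_{c_{0}}|$ hyperedges of $H^{*}_{\Gamma_{c_{0}}}$; moreover, as noted after the previous proposition, $\Delta([H^{*}_{\Gamma_{c_{0}}}]_{2})$ is attained at $c_{0}$, i.e. $\Delta([H^{*}_{\Gamma_{c_{0}}}]_{2}) = deg_{2}(c_{0})$. (b) Its rank is at most $\Delta(V,E)$: each hyperedge of $H^{*}_{\Gamma_{c_{0}}}$ is a set $\bar{C^{*}}([[u_{i}]_{\sim}]_{\theta}) = \bar{C^{*}}(u_{i}) = \{\bar{C}(V_{1}), \ldots, \bar{C}(V_{l})\}$, where $V_{1}, \ldots, V_{l}$ are the hyperedges of $(V,E)$ through $u_{i}$; hence it has at most $l = deg(u_{i}) \leq \Delta(V,E)$ colors, and this count is well defined on classes because $\sim$ and $\theta$ both preserve $\bar{C^{*}}$-values.

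Next I would estimate $deg_{2}(c_{0})$ inside $H^{*}_{\Gamma_{c_{0}}}$. By definition $deg_{2}(c_{0}) = \sum_{F} (|F| - 1)$, the sum ranging over the hyperedges $F$ of $H^{*}_{\Gamma_{c_{0}}}$ that contain $c_{0}$; by (a) there are exactly $|\Gamma_{c_{0}}|$ such $F$, and by (b) each satisfies $|F| - 1 \leq \Delta(V,E) - 1$. (If $H^{*}_{\Gamma_{c_{0}}}$ happens to be non-linear, two such hyperedges could meet outside $c_{0}$, but that can only decrease the two-section degree, so the bound is unaffected.) Therefore $deg_{2}(c_{0}) \leq |\Gamma_{c_{0}}| \cdot (\Delta(V,E) - 1)$, and combining with (a) gives $\Delta([H^{*}_{\Gamma_{c_{0}}}]_{2}) \leq (\Delta(V,E) - 1)\cdot|\Gamma_{c_{0}}|$, which is what was needed. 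The degenerate case $|\Gamma_{c_{0}}| = 0$ is consistent too: then $c_{0}$ is the only color missing from $\bar{C^{*}}([v]_{\sim})$, so $q(V,E) = |\bar{C^{*}}([v]_{\sim})| + 1$, which matches the right-hand side.

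The only real subtlety — and the step I would be most careful about — is fact (b), that a hyperedge of $H^{*}_{\Gamma_{c_{0}}}$ carries at most $\Delta(V,E)$ colors. This rests on the observation that $|\bar{C^{*}}(u)| \leq deg(u)$ for every $u \in V$ (a vertex sees at most as many distinct colors as it has incident hyperedges), together with the fact that $\bar{C^{*}}$ descends to a well-defined coloring first on $V/\sim$ and then on $\Omega/\theta$, as established earlier. Everything else is bookkeeping with the flower structure, so I do not anticipate further difficulty.
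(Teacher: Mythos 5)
Your proposal is correct and follows essentially the same route as the paper: both start from the preceding proposition $q(V,E) \leq |\bar{C^{*}}([v]_{\sim})| + \Delta([H^{*}_{\Gamma_{c_{0}}}]_{2}) + 1$ and then bound $\Delta([H^{*}_{\Gamma_{c_{0}}}]_{2})$ by $(r(H^{*}_{\Gamma_{c_{0}}}) - 1)\cdot\Delta(H^{*}_{\Gamma_{c_{0}}})$ together with $r(H^{*}_{\Gamma_{c_{0}}}) \leq \Delta(V,E)$ and $\Delta(H^{*}_{\Gamma_{c_{0}}}) = |\Gamma_{c_{0}}|$. The only difference is presentational: the paper cites its earlier lemma on two-section degrees, whereas you re-derive the needed half of it directly at the vertex $c_{0}$ (and your remarks on non-linearity and the $|\Gamma_{c_{0}}| = 0$ case are harmless extra care, not a new idea).
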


\begin{proof}
    This follows from the prior established facts about the two-section degree of a hypergraph, which in our case is that $\Delta([H^{*}_{\Gamma_{c_{0}}}]_{2}) + 1 \leq (r(H^{*}_{\Gamma_{c_{0}}}) - 1) \cdot \Delta(H^{*}_{\Gamma_{c_{0}}}) + 1$, so one then just has to substitute in the facts that $r(H^{*}_{\Gamma_{c_{0}}}) \leq \Delta(V,E)$ and $\Delta(H^{*}_{\Gamma_{c_{0}}}) = |\Gamma_{c_{0}}|$.
\end{proof}

\begin{manualtheorem}{2}
    $q(V,E) + |\Gamma_{c_{0}}|\leq \frac{|\Gamma_{c_{0}}|+1}{ar(V,E) - 1}\cdot \Delta([(V,E)]_{2}) + 1$.
\end{manualtheorem}

\begin{proof}
    $\rightarrow q(V,E) \leq |C^{*}([v]_{\sim})| + (\Delta(V,E) - 1) \cdot |\Gamma_{c_{0}}| + 1$, 

    \vspace{3mm}

    $\rightarrow q(V,E) + |\Gamma_{c_{0}}| \leq |C^{*}([v]_{\sim})| + |\Gamma_{c_{0}}| \cdot \Delta(V,E) +1$,

    \vspace{3mm}

    $\rightarrow q(V,E) + |\Gamma_{c_{0}}| \leq \Delta(V,E) + |\Gamma_{c_{0}}| \cdot \Delta(V,E) + 1$, 

    \vspace{3mm}

    $\rightarrow (ar(V,E) - 1) \cdot (q(V,E) + |\Gamma_{c_{0}}|) \leq (ar(V,E) - 1)\cdot \Delta(V,E) \cdot (|\Gamma_{c_{0}}|+1) + (ar(V,E) - 1)$, 

    \vspace{1mm}
    
    and since $(ar(V,E) - 1)\cdot \Delta(V,E) \leq \Delta([(V,E)]_{2})$, we get:

    \vspace{3mm}

    $\rightarrow q(V,E) + |\Gamma_{c_{0}}|\leq \frac{|\Gamma_{c_{0}}|+1}{ar(V,E) - 1}\cdot \Delta([(V,E)]_{2}) + 1$.
\end{proof}

\begin{corollary}
    If there exists $c_{0} \notin \bar{C}^{*}([v]_{\sim})$ with $|\Gamma_{c_{0}}| + 1 \leq ar(V,E) - 1$, then $q(V,E) \leq \Delta([(V,E)]_{2}) + 1$.
\end{corollary}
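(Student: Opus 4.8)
The plan is to derive this corollary as an immediate consequence of Theorem~2. First I would note that its hypothesis, namely the existence of some $c_{0} \notin \bar{C}^{*}([v]_{\sim})$, places us precisely in the second case of the dichotomy set up earlier in this section; in the first case the conjecture already holds trivially, so nothing is lost by working under this assumption. Thus Theorem~2 applies verbatim with this choice of $c_{0}$ together with the fixed vertex $v$ satisfying $deg_{2}(v) = \Delta([(V,E)]_{2})$, and it delivers
\[
q(V,E) + |\Gamma_{c_{0}}| \;\leq\; \frac{|\Gamma_{c_{0}}| + 1}{ar(V,E) - 1}\cdot \Delta([(V,E)]_{2}) + 1.
\]

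Next I would exploit the hypothesis $|\Gamma_{c_{0}}| + 1 \leq ar(V,E) - 1$. Since $(V,E)$ is loopless, every hyperedge has size at least $2$, so $ar(V,E) \geq 2$ and hence $ar(V,E) - 1 \geq 1 > 0$; the fraction above is therefore well-defined and the hypothesis yields $\tfrac{|\Gamma_{c_{0}}| + 1}{ar(V,E) - 1} \leq 1$. Because $\Delta([(V,E)]_{2}) \geq 0$ (it is a vertex degree), multiplying this coefficient bound by $\Delta([(V,E)]_{2})$ preserves the inequality, so $\tfrac{|\Gamma_{c_{0}}|+1}{ar(V,E)-1}\cdot \Delta([(V,E)]_{2}) \leq \Delta([(V,E)]_{2})$.

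Finally I would chain the two displayed estimates to get $q(V,E) + |\Gamma_{c_{0}}| \leq \Delta([(V,E)]_{2}) + 1$, and since $|\Gamma_{c_{0}}| \geq 0$, discarding it from the left-hand side only weakens the bound, giving $q(V,E) \leq \Delta([(V,E)]_{2}) + 1$. There is no genuine obstacle here — the statement is a routine specialization of Theorem~2 — and the only points that merit a moment's care are verifying that $ar(V,E) - 1$ is strictly positive (so that both the division and the monotonicity step are legitimate) and using the nonnegativity of $\Delta([(V,E)]_{2})$ to transfer the bound on the coefficient onto the product.
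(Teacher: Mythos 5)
Your proposal is correct and follows the same route as the paper, which simply cites Theorem~2 and calls the corollary an immediate consequence; you have merely written out the elementary steps (the coefficient being at most $1$, positivity of $ar(V,E)-1$, and discarding the nonnegative term $|\Gamma_{c_{0}}|$) that the paper leaves implicit.
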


\begin{proof}
    This follows from the bound in Theorem 2, as an immediate consequence when there exists $\Gamma_{c_{0}}$ such that $|\Gamma_{c_{0}}| + 1 \leq ar(V,E) - 1$.
\end{proof}

\vspace{2mm}

For the other main theorem of this section, we can derive a hypergraph from $(V,E)$, akin to taking all $\Gamma_{c_{i}}$ sets for $c_{i} \notin \bar{C}^{*}([v]_{\sim})$ as the hyperedges (but just considering the actual equivalence classes of the form $[[u_{i}]_{\sim}]_{\theta}$ rather than their images under $\bar{C}^{*}$ as vertices). It is then shown that the Berge-F\"{u}redi conjecture holds for a $(V,E)$ and $\bar{C}$ when this derived hypergraph has the \emph{Helly property}.

\begin{definition}
    A hypergraph is said to have the \emph{Helly property} when for every $n$ hyperedges $V_{1}, ... ,V_{n}$, if for every $V_{i}, V_{j}$ with $i, j \in [n]$ we have $V_{i} \cap V_{j} \neq \emptyset$, then the property implies $V_{1} \cap ...\cap V_{n} \neq \emptyset$.
\end{definition}

\begin{definition}
    Let $H_{\Gamma}:=(\cup_{c_{i} \in C/\bar{C}^{*}([v]_{\sim})} \Gamma_{c_{i}}, \{\Gamma_{c_{i}}|: c_{i} \in C/\bar{C}^{*}([v]_{\sim})\})$ be a hypergraph whose vertices are specific elements of $\Omega/\theta$, and whose hyperedges are the $\Gamma$ sets.
\end{definition}

\begin{manualtheorem}{2.1}
    For $(V,E)$, $\bar{C}$ with $ar(V,E) \geq 3$, suppose that they possess a derived hypergraph $H_{\Gamma}$ that has the Helly property. Then $q(V,E) \leq \Delta([(V,E)]_{2}) + 1$.
\end{manualtheorem}

\begin{proof}
    We already know that for any $c_{i}, c_{j} \notin C/\bar{C}^{*}([v]_{\sim})$, $\exists [[u_{ij}]_{\sim}]_{\theta} \in \Gamma_{c_{i}} \cap \Gamma_{c_{j}}$, for $\{c_{i}, c_{j}\} \subseteq \bar{C}^{*}([[u_{ij}]_{\sim}]_{\theta})$. So, if we assume the Helly property applies to $H_{\Gamma}$, then we know $\exists [[u]_{\sim}]_{\theta} \in \Omega/\theta$ such that $[[u]_{\sim}]_{\theta} \in \Gamma_{c_{1}} \cap ... \cap \Gamma_{c_{l}}$ which ensures its non-emptiness, with $l = |C/\bar{C}^{*}([v]_{\sim})|$. By definition, $C/\bar{C}^{*}([v]_{\sim}) \subseteq \bar{C}^{*}([[u]_{\sim}]_{\theta})$, hence $|C/\bar{C}^{*}([v]_{\sim})| \leq |\bar{C}^{*}([[u]_{\sim}]_{\theta})| \leq \Delta(V,E)$. So via $q(V,E) = |\bar{C}^{*}([v]_{\sim})| + |C/\bar{C}^{*}([v]_{\sim})|$ we get that $q(V,E) \leq 2 \cdot \Delta(V,E)$, so the conjecture holds for $ar(V,E) \geq 3$. 
\end{proof}

Intuitively, an $H_{\Gamma}$ would possess the Helly property when the hyperedges of $(V,E)$ representing all colors not in $\bar{C}^{*}([v]_{\sim})$ all intersect at some vertex-this vertex defining the common equivalence class which allows for the Helly property to be satisfied, as it belongs to every hyperedge $\Gamma_{ci}$, reflecting that it must contain all colors not in $\bar{C}^{*}([v]_{\sim})$.

\vspace{2mm}

Lastly, we have another condition for the conjecture holding, which invokes the Helly-type result above. 

\begin{lemma}
    Let $(V,E)$ be a $k$-uniform, linear hypergraph. If a family of hyperedges $\mathcal{F}\subseteq E$ forms a clique (is pair-wise intersecting), such that $|\mathcal{F}|> k^{2}-k+1$, then all hyperedges of $\mathcal{F}$ intersect at a single vertex.
\end{lemma}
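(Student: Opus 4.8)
The plan is to prove the contrapositive: assuming $\mathcal{F}$ is a pairwise-intersecting, $k$-uniform, linear family in which no single vertex lies in every edge, I would show $|\mathcal{F}| \le k^{2}-k+1$. The configuration to keep in mind as the extremal case is a projective plane of order $k-1$: its $k^{2}-k+1$ lines are pairwise intersecting, $k$-uniform, linear, and have no common point, so the bound should be exactly tight and the proof a counting argument squeezed against this picture.

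First I would fix one edge $A \in \mathcal{F}$ and extract the two consequences of linearity that drive everything. Since $\mathcal{F}$ is pairwise intersecting and linear, every $B \in \mathcal{F}\setminus\{A\}$ meets $A$ in \emph{exactly} one vertex. Writing $d_v$ for the number of edges of $\mathcal{F}$ that contain $v$, each such $B$ is therefore counted exactly once when we sum over the vertices of $A$, which gives the identity $\sum_{v \in A}(d_v - 1) = |\mathcal{F}| - 1$.

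Next I would bound $d_v$ for each $v \in A$. Because no vertex lies in all edges of $\mathcal{F}$, there is some edge $C \in \mathcal{F}$ with $v \notin C$. Every edge of $\mathcal{F}$ through $v$ must meet $C$ (pairwise intersecting), and the meeting vertex is not $v$ since $v \notin C$; moreover, by linearity, two distinct edges through $v$ cannot meet $C$ at the same vertex, or else they would share two vertices. Hence the edges through $v$ inject into $C$, so $d_v \le |C| = k$. Substituting into the identity yields $|\mathcal{F}| - 1 = \sum_{v \in A}(d_v - 1) \le |A|\,(k-1) = k(k-1)$, that is, $|\mathcal{F}| \le k^{2}-k+1$, which is exactly the contrapositive of the lemma.

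I do not expect a serious obstacle here; the argument is short. The one step that needs care is $d_v \le k$: one must genuinely use the \emph{absence} of a common vertex to produce the witness edge $C$ avoiding $v$, and then apply linearity twice — once to see that all the hits on $C$ are different from $v$, and once to see that they are pairwise distinct. It is also worth noting that $k$-uniformity is used only to get the clean bound $|C| = k$; with merely bounded rank $r$ the same reasoning gives $|\mathcal{F}| \le r(r-1)+1$.
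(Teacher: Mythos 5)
Your argument is correct, and it is worth noting that the paper itself does not prove this lemma at all: it simply states it and defers to the citation of Naik, Rao, Shrikhande and Singhi (1980). So there is nothing in the paper to compare step-by-step against; what you have written is a complete, self-contained replacement for that citation. Your double count is sound: fixing $A\in\mathcal{F}$, linearity plus pairwise intersection forces $|A\cap B|=1$ for every $B\neq A$, giving $\sum_{v\in A}(d_v-1)=|\mathcal{F}|-1$; and under the contrapositive hypothesis every $v\in A$ admits a witness edge $C$ with $v\notin C$, so the map sending each edge through $v$ to its unique intersection point with $C$ is injective (two edges through $v$ hitting $C$ at the same point would share two vertices), whence $d_v\le k$ and $|\mathcal{F}|\le k(k-1)+1$. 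This is the classical de Bruijn--Erd\H{o}s-style argument, and your observation that the projective plane of order $k-1$ realizes the extremal configuration correctly explains why the threshold $k^2-k+1$ is sharp. The only cosmetic caveat is that the argument implicitly assumes $\mathcal{F}\neq\emptyset$ so that $A$ can be chosen, but the lemma is vacuous otherwise.
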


This result is made standard in \cite{NaikRaoShrikhandeSinghi1980}, and will help us specifically when all the hyperedges of such a clique possess precisely every color not in $\bar{C^{*}}([v]_{\sim})$ when in this particular logical case.

\begin{manualcor}{2.2}
    Let $(V,E)$ be a $k$-uniform, linear hypergraph for $k \geq 3$, with a proper minimal hyperedge coloring $\bar{C}$. Suppose further that, for fixed $v \in V$ with $deg_{2}(v) = \Delta([(V,E)]_{2})$, there exists a clique of hyperedges (all pairwise intersecting) $\mathcal{F} \subseteq E$ with the nonempty set of colors $C/\bar{C}^{*}([v]_{\sim})$, such that $\bar{C}(\mathcal{F}) = C/\bar{C}^{*}([v]_{\sim})$.Then,  $|\mathcal{F}| > k^{2} -k + 1$ implies that $q(V,E) \leq \Delta([(V,E)]_{2}) + 1$.
\end{manualcor}

\begin{proof}
    The assumptions are that we have $\mathcal{F}$ with $\bar{C}(\mathcal{F}) = C/\bar{C}^{*}([v]_{\sim})$, and since $|\mathcal{F}| > k^{2}-k+1$, via the above lemma we get that they all intersect at a single vertex $u \in V$. Hence, $C/\bar{C}^{*}([v]_{\sim}) \subseteq \bar{C}^{*}([[u]_{\sim}]_{\theta})$. So for a given $c_{i} \in C/\bar{C}^{*}([v]_{\sim})$, this reasoning yields the $q(V,E) \leq 2 \cdot \Delta(V,E)$ argument from before, and so the conjecture follows.
\end{proof}

Although fairly specific in nature, the bounds of this paper go beyond numerical-based thresholds to guarantee the conjecture holds in a particular case. Our approach is structural, and replaces the parameter assumptions with a verifiable intersection pattern, capturing instances that fall outside known thresholds.

\section{Examples}

We first compute a simple example of a colored hypergraph with a nontrivial $\boldsymbol{T}$ group and the bounds it yields. In Figure \ref{fig1}, the $\boldsymbol{T}$ group can be ascertained by observing the five distinct transpositions that can swap pairs of vertices within hyperedges, along with the larger map which transposes the two halves of the hypergraph. The size of the group is $64$, which comes from the generators $\rho_{1}, \rho_{1}', \rho_{2}, \rho_{2}', \rho_{3}$, and $\delta$, such that $\delta \circ \rho_{1} \circ \delta^{-1} = \rho_{1}'$, $\delta \circ \rho_{2} \circ \delta^{-1} = \rho_{2}'$, and $\delta \circ \rho_{2} = \delta$. This allows us to determine the partitioning of the hyperedges as fixed by the group elements in our sum. Mainly, the bound gives $4 \leq \frac{1}{64} \cdot \sum_{t \in T}|f^{-1}(\boldsymbol{1})|^{t}$, where the only elements that don't stabilized the entire hyperedge set are those involving $\delta$. So this gives $32$ elements which involve $\delta$ in the composition, each one of which stabilizes just the single "bridge" hyperedge connecting the two halves. Hence, $4 \leq \frac{(32 + 32 \cdot 11)}{64} = 6$, showing that we can yield nice upper bounds with this when the coloring map and symmetries align.

\begin{figure}[!h]
    \centering
    \includegraphics[width=0.7\linewidth]{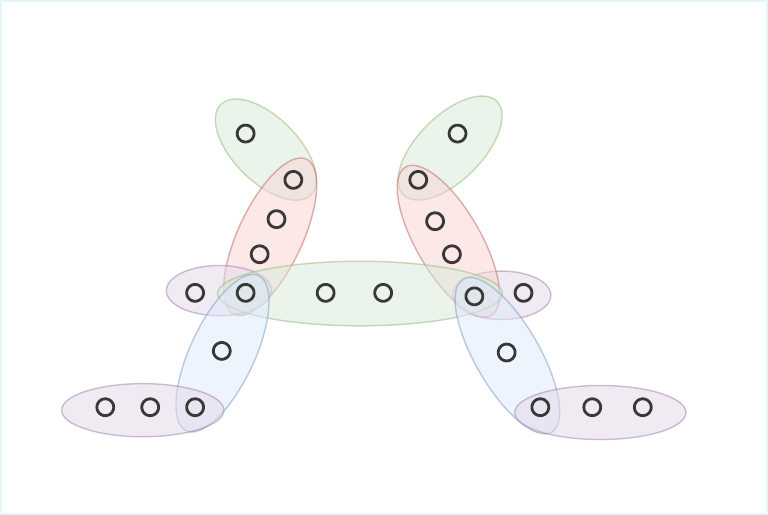}
    \caption{Group-theoretic bound example}
    \label{fig1}
\end{figure}

\begin{figure}
    \centering
    \includegraphics[width=.75\linewidth]{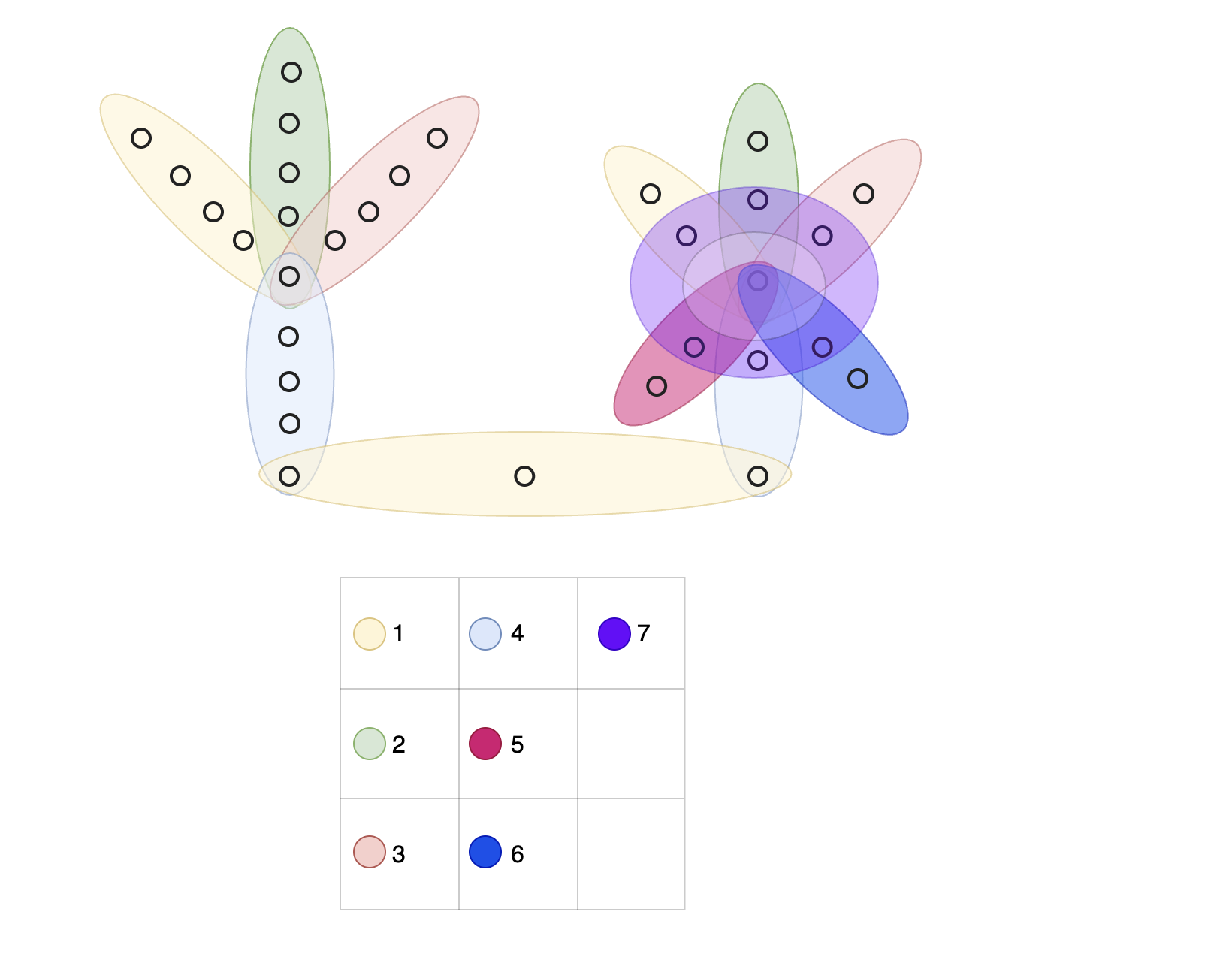}
    \caption{A linear hypergraph $(V,E), \bar{C}$ with its proper minimal coloring of 7 colors.}
    \label{fig2}
\end{figure}

\begin{figure}
    \centering
    \includegraphics[width=0.75\linewidth]{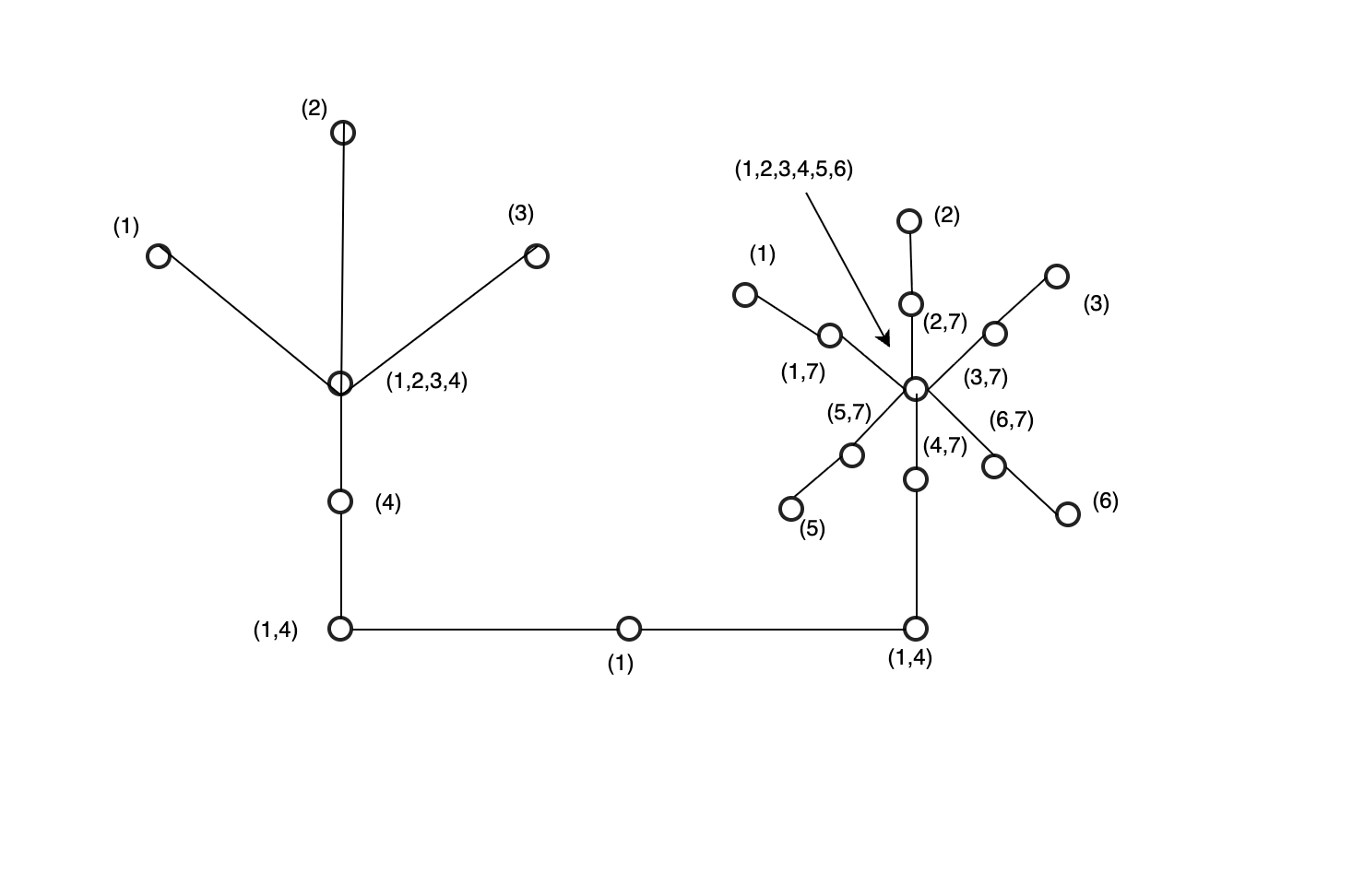}
    \caption{This graph represents $V/\sim$ with its equivalence classes as vertices.}
    \label{fig3}
\end{figure}

\begin{figure}
    \centering
    \includegraphics[width=0.75\linewidth]{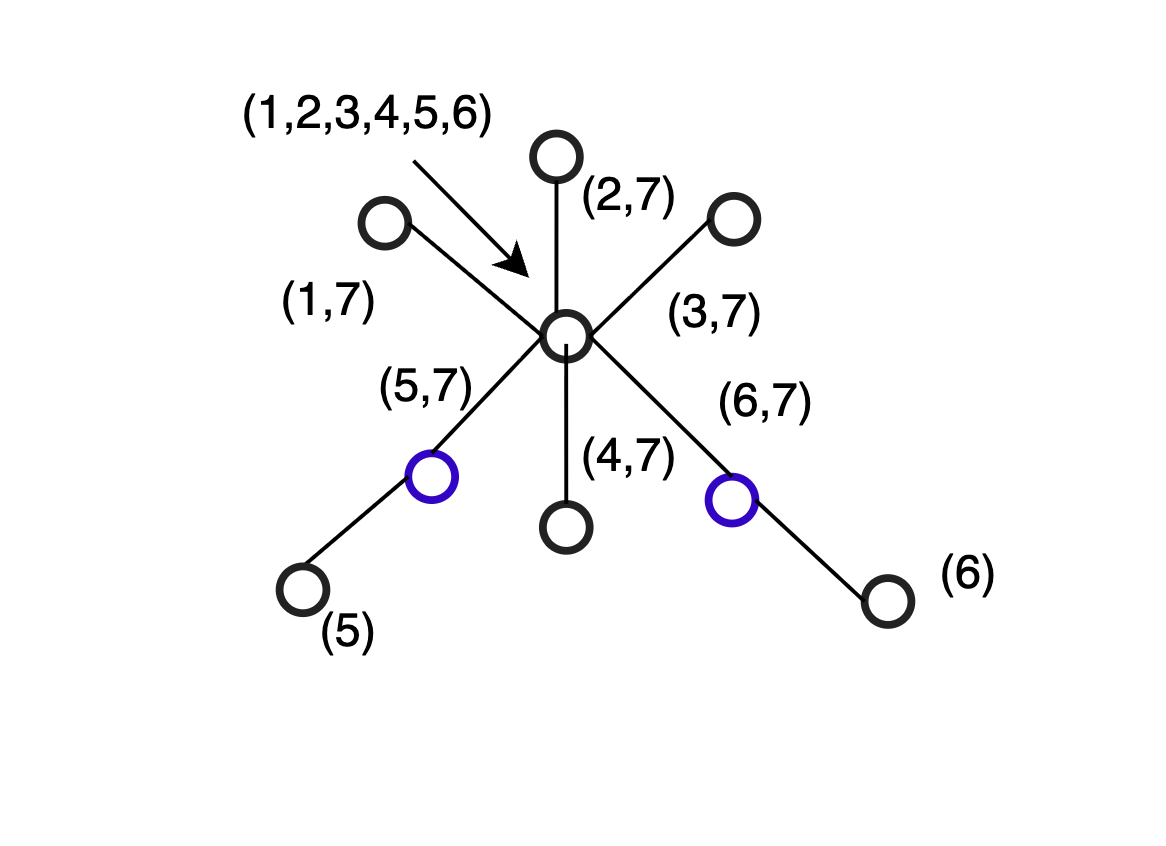}
    \caption{This subgraph shows $\Omega/\theta$, with the two vertices defining $\Gamma_{7}$ highlighted in dark blue.}
    \label{fig4}
\end{figure}

\begin{figure}
    \centering
    \includegraphics[width=0.75\linewidth]{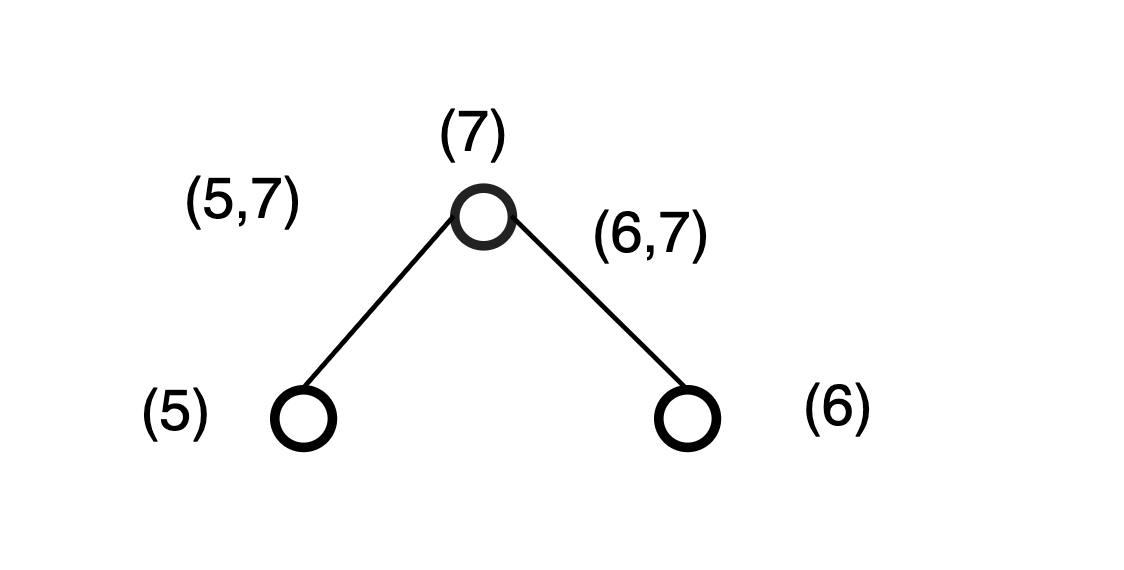}
    \caption{This is the hypergraph (just a graph in this case) $H_{\Gamma_{7}}$, whose vertices are colors}
    \label{fig5}
\end{figure}

For Figures \ref{fig2}, \ref{fig3}, \ref{fig4}, \ref{fig5}, we can compute the bounds given by the combinatorial theory. Figure \ref{fig2} shows the hypergraph with its coloring. Figure \ref{fig3} transitions to visualizing $V/\sim$, where one can see the collapse of the "vacuous" vertices contained within singular hyperedges. Figure \ref{fig4} shows $\Omega/\theta$, with the two elements of $\Gamma_{7}$ colored in, and Figure \ref{fig5} shows the resulting derived (hyper)graph $H_{\Gamma_{7}}$. $q(V,E) + |\Gamma_{7}| \leq \frac{|\Gamma_{7}| + 1}{ar(V,E) - 1} \cdot \Delta([(V,E)]_{2}) + 1$ holds (in this case for the fixed color $7$), since from the examples we get $7 + 2 \leq \frac{2}{2} \cdot 20 + 1$, so $9 \leq 21$. Note that the graph edge sets of the graphs for $V/\sim$ and $\Omega/\theta$ are imposed onto the actual set collections, to show the transition from one structure down through the modded version, all being underlying form of the hypergraph. Once we "mod" with $\sim$, it is more apt to think in terms of subgraphs of the two-section graph as opposed to sub-hypergraphs as the edge-components to our quotient objects. In general, the sequence of the figures shows starting with the initial $(V,E)$ with its proper minimal coloring $\bar{C}$ applied to the hyperedges, then moving to each proceeding structure once modded by the two respective relations $\sim$ and $\theta$. Then the relevant $\Gamma$ set is isolated and its nontrivial color hypergraph $H^{*}_{\Gamma_{7}}$ is shown.

\vspace{2mm}

Figures \ref{fig6}, \ref{fig7}, \ref{fig8}, \ref{fig9}, \ref{fig10} show the constructions for a hypergraph whose $H_{\Gamma}$ has the Helly property. This follows from the construction $(V,E)$ with its $\bar{C}$ possessing a vertex which is the intersection point of hyperedges representing all colors of $C/\bar{C}^{*}([v]_{\sim})$. Hence, the double equivalence class of said vertex is the common intersection point for all $\Gamma_{ci}$, and where pairwise intersection is achieved by proposition 4.7. Clearly, $q(V,E) \leq 2 \cdot \Delta(V,E)$ and $q(V,E) \leq \Delta([(V,E)]_{2}) + 1$.

\begin{figure}
    \centering
    \includegraphics[width=0.75\linewidth]{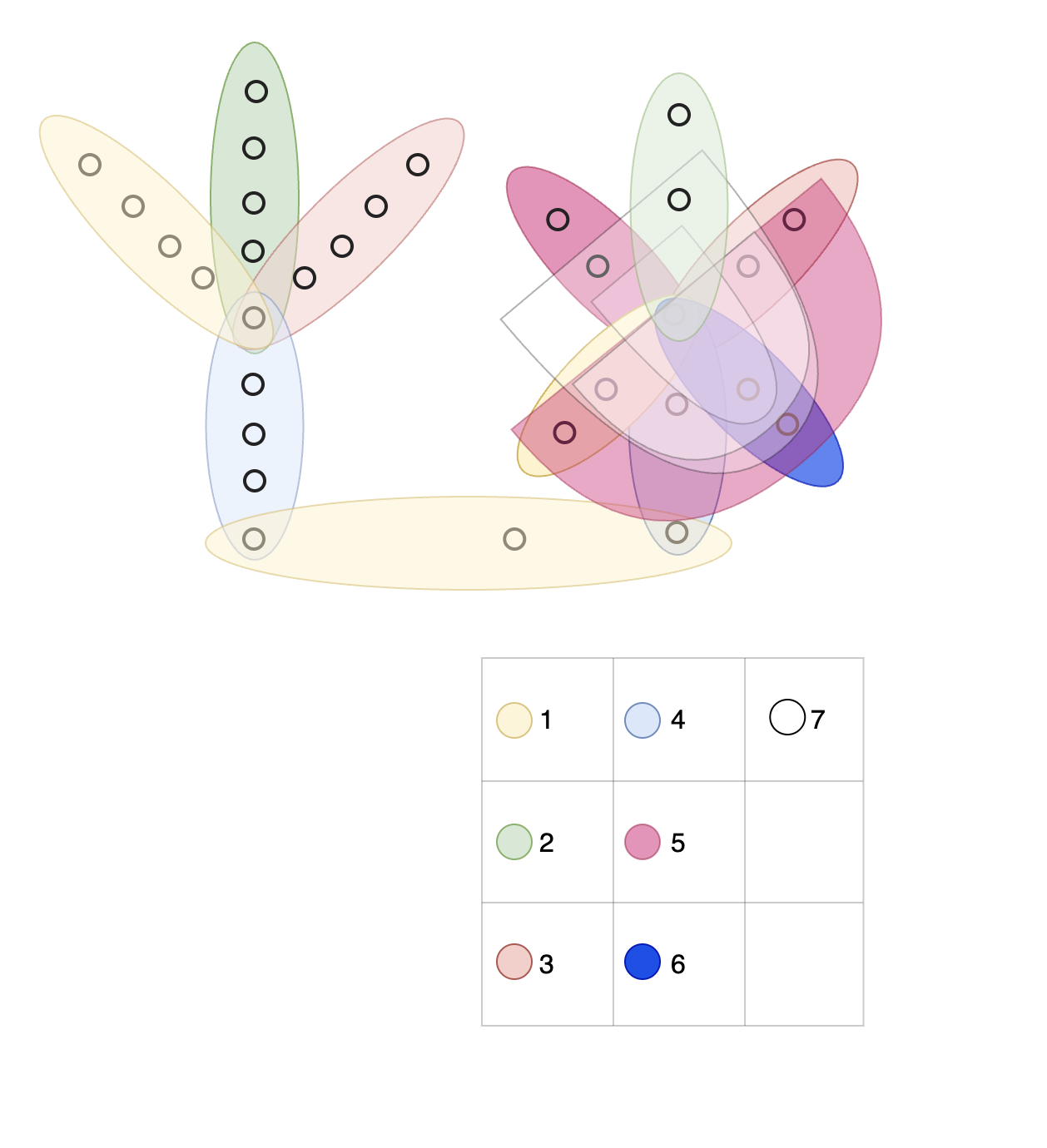}
    \caption{This figure shows a hypergraph whose $H_{\Gamma}$ exhibits the Helly property.}
    \label{fig6}
\end{figure}

\vspace{4cm}

\begin{figure}
    \centering
    \includegraphics[width=0.75\linewidth]{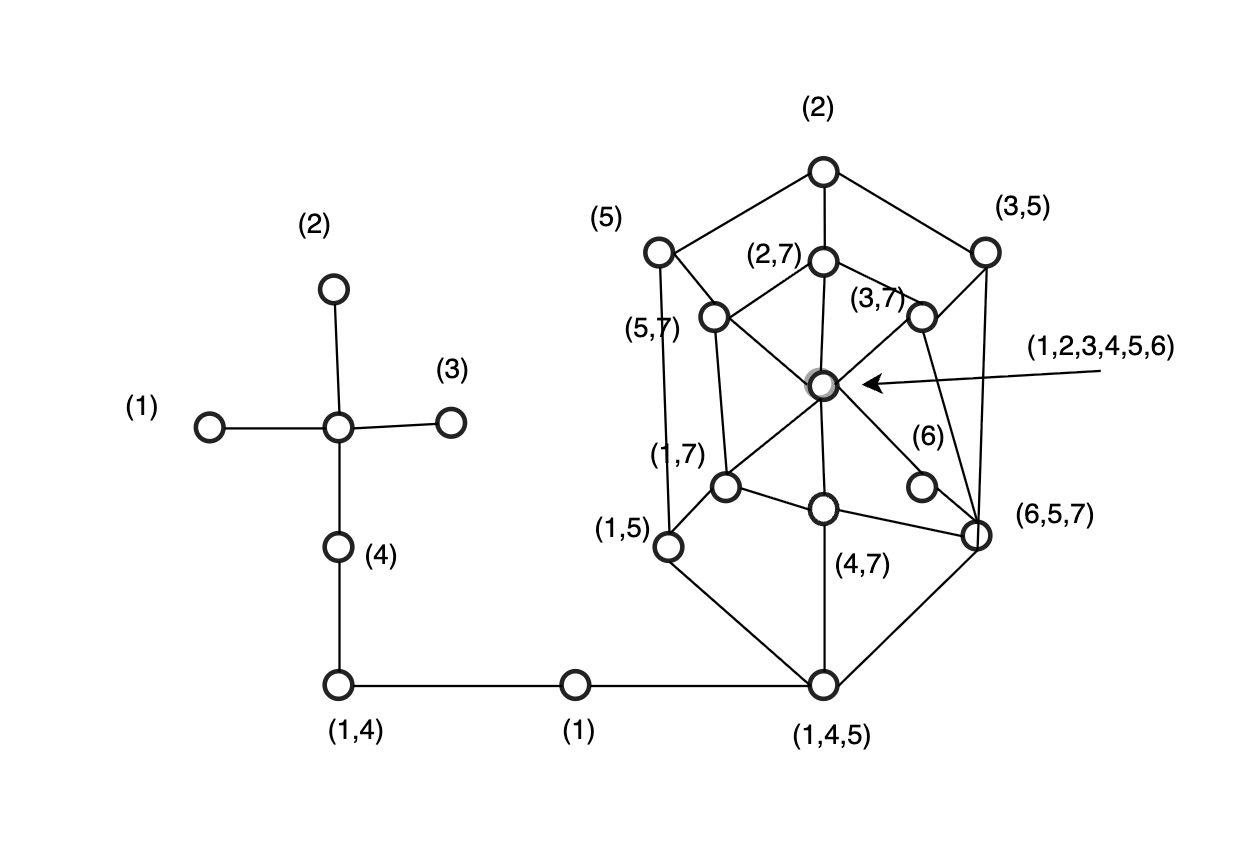}
    \caption{$V/\sim$}
    \label{fig7}
\end{figure}

\begin{figure}
    \centering
    \includegraphics[width=0.70\linewidth]{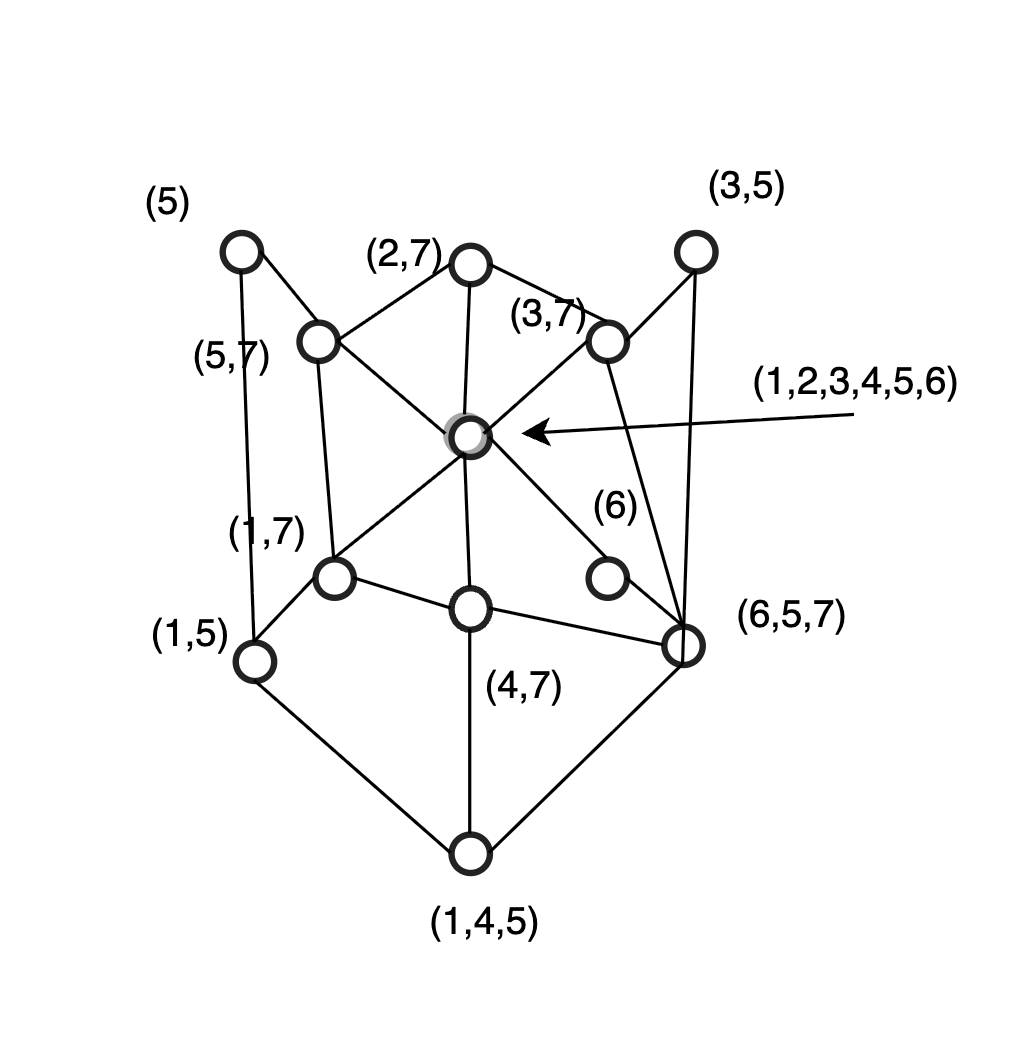}
    \caption{$\Omega/\theta$}
    \label{fig8}
\end{figure}

\begin{figure}
    \centering
    \includegraphics[width=0.75\linewidth]{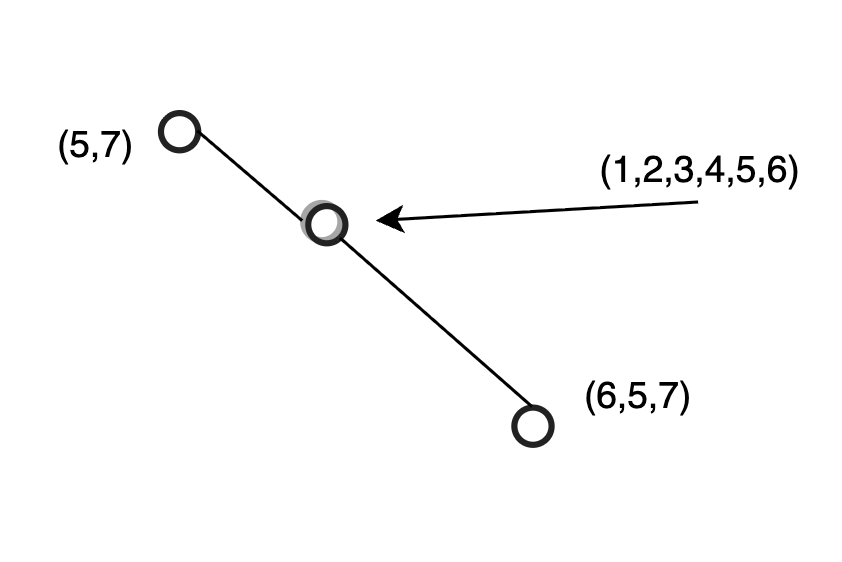}
    \caption{This depicts the vertices of $H_{\Gamma}$, from which clearly all $\Gamma_{c_{i}}$ contain the equivalence class with $\bar{C}^{*}$ color subset comprised of $6,5,7$}
    \label{fig9}
\end{figure}

\begin{figure}
    \centering
    \includegraphics[width=0.75\linewidth]{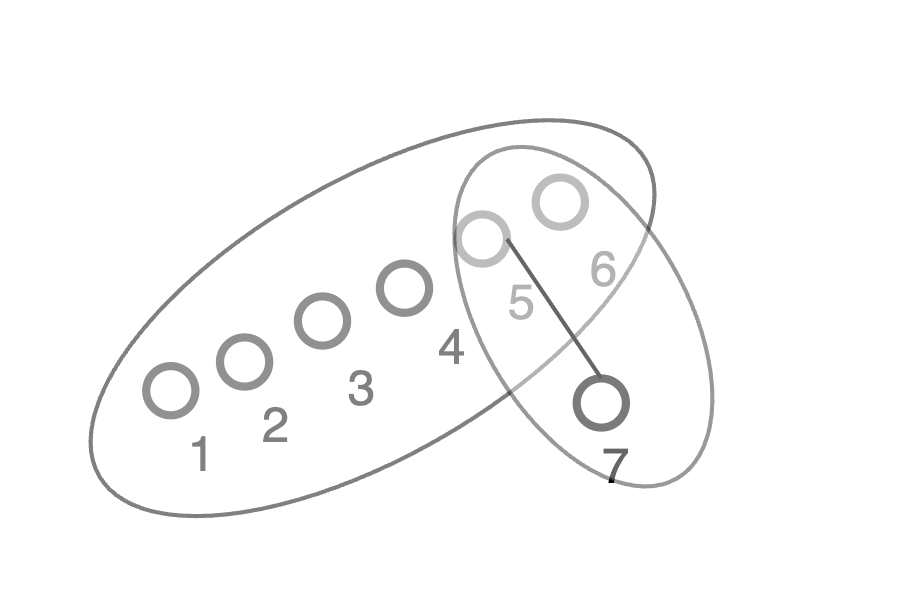}
    \caption{The $H^{*}_{\Gamma_{5}}$ hypergraph}.
    \label{fig10}
\end{figure}

\clearpage

\section{Conclusions and Future Directions}

To conclude, we have demonstrated two different types of upper bounds on the chromatic index of linear loopless hypergraphs. The first is from Theorem 1, which is more deductive in nature, following as a consequence of the existence of potentially non-trivial subgroups of the hypergraph automorphism group tied to minimal proper coloring maps of such hypergraphs. The second more constructive bound of the paper is from Theorem 2, which can be defined for any $c_{0} \in C/\bar{C}^{*}([v]_{\sim})$, and if a $\Gamma_{c_{0}}$ is of size equal to or less than the antirank minus one, then the Berge-F\"{u}redi conjecture holds. On this note, we establish two other distinct but related sufficient conditions for when the conjecture in question must hold. The first occurs when the derived hypergraph $H_{\Gamma}$ has the Helly property, turning its pairwise-intersecting property into full intersection at a existing element, implying the conjecture must hold through the logic of the surrounding framework. The second related condition is specified to apply to $k$-uniform linear hypergraphs, and leverages a result of \cite{NaikRaoShrikhandeSinghi1980}. It says that, if a subcollection of hyperedges which serve as representatives for all colors of $C/\bar{C}^{*}([v]_{\sim})$ is pairwise intersecting, as well as of size greater than $k^{2}-k+1$, then the conjecture holds.

\vspace{2mm}

There are three main avenues for future research directions. The first is at the level of the background theory, pertaining to the correspondence between satisfiable Boolean functions and hypergraphs, via the isomorphism between the power set and the Hamming space. This can let one translate problems back and forth between the two domains and draw computational parallels between computability problems of hypergraphs and those of Boolean functions. For instance, the class of $n$-place Boolean functions constrained to the condition of only (potentially) evaluating strings as true if they have precisely $k$ "$\boldsymbol{1}$" entries for a fixed $k$ with $1 \leq k \leq n$ , would be the same as restricting the class of hypergraphs (of $n$ vertices) to just $k$-uniform hypergraphs. The theory of hypergraphs and its logically stronger counterparts in matroids, abstract simplicial complexes, etc., could be used to provide an analogous scaffolding for Boolean functions and their classification.

\vspace{2mm}

The second direction is at the level of the group theory. This paper has just presented one main bound concerning the subgroup $\boldsymbol{T}$ of the full hypergraph automorphism group, in relation to the coloring map $\bar{C}$ and how it respects these symmetries. However, more nuanced group-theoretic information might be teased out of these hypergraph colorings, in the form of properties about their associated coloring groups that goes beyond averaging the size and number of orbits. There is also a general approach one could take, in studying the space of all proper (possibly minimal as well) coloring functions on a given hypergraph, and the corresponding space of color-preserving subgroups of the automorphism group. The general idea is that the groups which emerge from a hypergraph equipped with a coloring map have encoded in them information about said structures, such as the number of orbits upper bounding the cardinality of the color set.

\vspace{2mm}

The third and perhaps most pertinent direction of research is with the combinatorial side of the paper, in exploring further analysis of the derived hypergraphs involved, as well as the related $\Gamma$ sets. Other sufficient conditions for the conjecture to hold may be found using these tools, and studying the intrinsic hypergraph properties of the $H^{*}_{\Gamma_{c_{i}}}$ and $H_{\Gamma}$ hypergraphs. In general, there are two sides to the hypergraphs derived from the initial $(V,E)$. On one side are the hypergraphs like $H_{\Gamma}$ which consist of the actual equivalence classes or sets of equivalence classes, whereas on the other side we have $H_{\Gamma_{c_{i}}}^{*}$, defined as consisting of color subsets. So fully reasoned out, there is the collection of $H_{\Gamma_{ci}}$ and their limiting object $H_{\Gamma}$ with all the $\Gamma$ sets (of equivalence classes) as hyperedges, and then all the $H_{\Gamma_{c_{i}}}^{*}$ and their limiting object $H_{\Gamma}^{*}$, comprised of subsets of colors, with $\bar{C}^{*}$ the connecting map.

Another consideration is to think of $\Gamma$ as a mapping from the domain $C/\bar{C}^{*}([v]_{\sim})$ to the value assigned with $\frac{|\Gamma_{c_{0}}|+1}{ar(V,E) - 1}\cdot \Delta([(V,E)]_{2}) + 1$, for each $c_{0} \in C/\bar{C}^{*}([v]_{\sim})$. The question then becomes one relating to minimizing $\Gamma$ as a function, enough to be $\leq \Delta([(V,E)]_{2}) +1$.

\section*{Acknowledgments}

We would like to cordially thank the organizers of the Wolfram Summer School 2024, where part of this work was initiated.

\end{document}